\def\NAT@def@citea{\def\@citea{\NAT@separator}}
\begin{document}

\title{A linesearch projection algorithm for solving equilibrium problems without monotonicity in Hilbert spaces}
\titlerunning{A linesearch projection algorithm for equilibrium problems without monotonicity}
\author{Lanmei Deng     \and
        Rong Hu   \and
        Yaping Fang 
}
\authorrunning{L. Deng, R. Hu and Y. Fang} 


\institute{Lanmei Deng \at
              College of Mathematics, Sichuan University, Chengdu, Sichuan, P.R. China\\
              \email{lanmeideng@126.com}           
           \and
           Rong Hu \at
             College of Applied Mathematics, Chengdu University of Information Technology, Chengdu, Sichuan, P.R. China\\
              \email{ronghumath@aliyun.com}
           \and
              Yaping Fang\at
              College of Mathematics, Sichuan University, Chengdu, Sichuan, P.R. China\\
              \email{ypfang@scu.edu.cn}           \\
           \emph{Corresponding author.}
}
\date{Received: date / Accepted: date}

\maketitle
\begin{abstract}
We propose a linesearch projection algorithm for solving non-monotone and non-Lipschitzian equilibrium problems in Hilbert spaces. It is proved that the sequence generated by the proposed algorithm converges strongly to a solution of the equilibrium problem under the assumption that the solution set of the associated Minty equilibrium problem is nonempty. Compared with existing methods, we do not employ Fej\'{e}r monotonicity in the strategy of proving the convergence. This comes from projecting a fixed point instead of the current point onto a subset of the feasible set at each iteration. Moreover, employing an Armijo-linesearch without subgradient has a great advantage in CPU-time. Some numerical experiments demonstrate the efficiency and strength of the presented algorithm.
\keywords{Nonmonotone equilibrium problem \and Minty equilibrium problem\and Projection algorithm \and Armijo-linesearch\and Strong convergence}
\subclass{ 47J25\and 65K15\and 90C33}
\end{abstract}

\noindent

\section{Introduction}
\label{sec:1}
Let $\mathbb{H}$ be a Hilbert space with the inner product $\langle\cdot,\cdot\rangle$ and the norm $\|\cdot\|$. The strong and weak convergence are denoted by `$\rightarrow$' and `$\rightharpoonup$' in $\mathbb{H}$, respectively. Let $C$ be a nonempty closed convex subset of $\mathbb{H}$ and $f:C \times C\to \mathbb{R}$ be an equilibrium bifunction, i.e., $f(x,x)=0$ for all $x \in C$. The equilibrium problem associated with $f$ and $C$ in the sense of Muu, Blum and Oettli \cite{Muu-Oettli-EP1992,Blum-Oettli-EP1994} consists in finding a point $x^*\in C$ such that $$f(x^*,y)\geq0, \quad \forall y\in C,$$ which is also known as the Ky Fan inequality \cite{Ky-Fan-VI1972}. This problem and its solution set are denoted by $EP(f,C)$ and $S_E$, respectively. Associated with $EP(f,C)$, the Minty equilibrium problem consists in finding  a point $x^*\in C$ such that $$f(y,x^*)\leq0,\quad  \forall y\in C,$$which is denoted by $MEP(f,C)$ and its solution set is denoted by $S_M$. For an excellent survey on the existence results of $EP(f,C)$, the readers are referred to \cite{sm-se-EP2003, existence-EP2013} and the references quoted therein.

$EP(f,C)$ is convenient for reformulating the variational inequality problem, the fixed point problem, the saddle point problem, the optimization problem, the generalized Nash equilibrium problem in noncooperative game theory, the complementarity problem and the minimax problem (see, e.g., \cite{D.Q.Tran-Extragradient-EP2008, Regularization-VI2009, extragradient-VI1976, subgradient-extragradient-VI2011, NST-condition2007,HuangNJ-projection-VI2011} and the references therein). For instance, when $f(x, y)=\langle F(x), y-x\rangle$ with $F: C\to \mathbb{H}$ being a mapping, $EP(f,C)$ collapses to the variational inequality problem $VI(F,C)$ introduced by Lions and Stampacchia \cite{classicalVI1967}:
$$\text{ Find  }\ x^*\in C \, \text{ such  that } \,  \langle F(x^*), y-x^*\rangle\geq0,\quad \forall y\in C$$
and   $MEP(f,C)$ collapses to the Minty variational inequality problem $MVI(F,C)$:
$$\text{ Find  }\ x^*\in C \, \text{ such  that } \,  \langle F(x), y-x^*\rangle\geq0,\quad \forall y\in C.$$
Based on the reformulation, solution methods for solving aforementioned problems can usually be extended, with suitable modifications, to $EP(f,C)$.

The existing methods for solving $EP(f,C)$ can be mainly summarized as follows: the projection methods \cite{Regularization-VI2009, Relaxed-projection-FEP2011}, the extragradient methods with or without linesearches \cite{Anh-hybrid2013, Dual-extragradient-EP2012, D.Q.Tran-Extragradient-EP2008, combined-relaxation-EP2006,DENG-HU-FANG2020,Hieu}, the proximal point methods \cite{PPM-EP2010, SplittingPM-EP2009, PPM-NEP2003}, the subgradient methods \cite{inexact-subgradient-EP2011, subgradient-extragradient-EP2015}, the methods based on gap function \cite{Gap-function-EP2003, Iteration-Gap-function-EP2012}, and the methods based on auxiliary problem \cite{auxiliary-EP2003, proximal-like-Equi-programming1997}. Each of these solution methods is adapted to a class of equilibrium problems and guarantees the convergence of the method. See \cite{existence-EP2013} and the references quoted therein for an excellent survey on the existing methods.

Among the solution methods for solving $EP(f,C)$, the projection-type methods have a good advantage in implementing the iteration when the feasible set $C$ has a simple structure, such as a ball or a polyhedral set. This is the reason why they are popular with experts and researchers. To the best of our knowledge, most of projection-type methods require at least the assumption of pseudomonotonicity of the equilibrium function $f$ (see, e.g., \cite{projection-pseudo-EP2015, Anh-projection-pseudomonotone-EP2013,Hieu}). For example, Dinh and Muu \cite{projection-pseudo-EP2015} presented a projection algorithm for solving pseudomonotone and non-Lipschitzian equilibrium problems and analyzed the convergence. However, the pseudomonotonicity assumption may not be satisfied in some practical problems, for instance, the Nash-Cournot equilibrium problem considered in \cite{Dual-extragradient-EP2012}. In light of this situation, motivated by Dinh and Muu \cite{projection-pseudo-EP2015} and Ye and He \cite{Yiran-He-NVI2015}, Dinh and Kim \cite{projection-NEP2016} proposed projection algorithms for solving nonmonotone $EP(f,C)$. Their convergence does not require any monotonicity and Lipschitz-type property of the equilibrium bifunction $f$ but the nonemptyness of $S_M$. More precisely, Dinh and Kim \cite{projection-NEP2016} modified the second projection step of the projection algorithm for solving pseudomonotone and non-Lipschitzian equilibrium problems in \cite{projection-pseudo-EP2015}. The modification consists in projecting the current point onto shrinking convex subsets of $C$ which contain $S_M$. This modification guarantees the convergence without the assumption of pseudomonotonicity of the equilibrium function $f$. Their strategy is the same as the one of Strodiot, Vuong and Nguyen in \cite{J.J.Strodiot-Shrinking-pro-extra-NEP2016}. In particular, the algorithm of Dinh and Kim \cite{projection-NEP2016} coincides with the double projection algorithm of Ye and He \cite{Yiran-He-NVI2015} when the equilibrium problem reduces to a finite dimensional variational inequality problem.

Recently, Burachik and D\'{\i}az Mill\'{a}n \cite{Burachik-multiVI2019} presented a projection-type algorithm for solving nonmonotone variational inequality problems for point-to-set operators and obtained that the sequence generated by the proposed algorithm converges to a solution of the variational inequality. By projecting a fixed point instead of the current point onto a subset of the feasible set at each iteration, they proved the convergence without using Fej\'{e}r convergence, which is a classical tool for existing projection-type methods (see \cite{Yiran-He-NVI2015}).

 Inspired by Burachik and D\'{\i}az Mill\'{a}n \cite{Burachik-multiVI2019} and Dinh and Kim \cite{projection-NEP2016}, we propose a projection algorithm for solving nonmonotone and non-Lipschitzian equilibrium problems in $\mathbb{H}$ and prove that the sequence generated by the proposed algorithm converges strongly to a solution of $EP(f, C)$. More precisely, we take the technique in \cite{Burachik-multiVI2019,J.J.Strodiot-Shrinking-pro-extra-NEP2016} of projecting a fixed point instead of the current point onto a subset of the feasible set at each iteration. Compared with the method in \cite{J.J.Strodiot-Shrinking-pro-extra-NEP2016}, the difference consists in projecting a fixed point to different half-spaces at each iteration, allowing us to prove the convergence without using Fej\'{e}r monotonicity. In addition, compared with the method of Dinh and Kim \cite{projection-NEP2016},  we employ an Armijo-linesearch  without subgradient, which was introduced by \cite{D.Q.Tran-Extragradient-EP2008}. Our motivation originates in the result reported by Burachik and D\'{\i}az Mill\'{a}n \cite{Burachik-multiVI2019} that their strategy does well in solving nonmonotone variational inequality problems. Thus we extend the method for $VI(F, C)$ in finite dimensional spaces in \cite{Burachik-multiVI2019} to $EP(f, C)$ in Hilbert spaces, with suitable modifications. Compared with the methods in \cite{projection-NEP2016,Yiran-He-NVI2015,Burachik-multiVI2019}, our algorithm has a great advantage in the number of iterations and CPU-time. This has been illustrated by numerical experiments.

 The rest of the paper is organized as follows. In Sect. 2, we recall some preliminary results. Sect. 3 contributes to presentation of a projection algorithm for nonmonotone $EP(f,C)$ and its convergence under the nonemptyness of $S_M$. In Sect. 4, some numerical results demonstrate the strength and efficiency of the proposed algorithm. Finally, Sect. 5 makes a conclusion of the result of our work.
\section{Preliminaries}
\label{sec:2}
For each $x\in \mathbb{H}$, there exists a unique point in $C$, denoted by $P_C(x)$, 
such that $$\|x-P_C(x)\|\leq\|x-y\|,\quad \forall y\in C.$$ The mapping $P_C:\mathbb{H}\to C$ is known as the metric projection. Let $d(\cdot,C)$ be the distance function to $C$, i.e., $d(x,C)=\inf\{\|x-y\|:y\in C\}$. The following well-known results of the projection $P_C$ will be used in the sequel.
\begin{lemma}\label{projeciton-property} (See \cite{CAMOT2011})
The following statements hold:
\begin{enumerate}
\item[(i)]$z=P_C(x)$ if and only if $\langle x-z,y-z\rangle\leq0,\ \forall y\in C;$  
\item[(ii)]$\|P_C(x)-P_C(y)\|^2\leq\|x-y\|^2-\|P_C(x)-x+y-P_C(y)\|^2,\ \forall x,y\in C.$
\end{enumerate}
\end{lemma}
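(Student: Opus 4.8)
The plan is to derive both parts from the defining minimization property of $P_C$, treating (i) first and then obtaining (ii) as an algebraic consequence of (i).

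For part (i) I would prove the two implications separately. For the forward direction, suppose $z=P_C(x)$ and fix $y\in C$. By convexity the point $z+t(y-z)\in C$ for every $t\in[0,1]$, so the minimality of $z$ gives $\|x-z\|^2\leq\|x-z-t(y-z)\|^2$. Expanding the right-hand side and cancelling $\|x-z\|^2$ yields $2t\langle x-z,y-z\rangle\leq t^2\|y-z\|^2$; dividing by $t>0$ and letting $t\to0^+$ produces $\langle x-z,y-z\rangle\leq0$. For the converse, assume this inequality holds for all $y\in C$. Then for any $y\in C$ I would expand $\|x-y\|^2=\|x-z\|^2-2\langle x-z,y-z\rangle+\|z-y\|^2\geq\|x-z\|^2$, since the middle term is nonnegative by hypothesis and the last term is nonnegative; this shows $z$ attains the minimum distance, i.e. $z=P_C(x)$.

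For part (ii), write $u=P_C(x)$ and $v=P_C(y)$. Applying (i) to $x$ with the test point $v\in C$ gives $\langle x-u,v-u\rangle\leq0$, and applying (i) to $y$ with the test point $u\in C$ gives $\langle y-v,u-v\rangle\leq0$. The remaining work is purely algebraic: setting $a=x-u$ and $b=y-v$, one has $u-x+y-v=b-a$ and $x-y=(u-v)+(a-b)$, so expanding $\|x-y\|^2$ and subtracting $\|u-x+y-v\|^2=\|a-b\|^2$ collapses the target inequality to $0\leq\langle u-v,a-b\rangle$. Finally I would rewrite $\langle u-v,a-b\rangle=\langle x-u,u-v\rangle-\langle y-v,u-v\rangle$ and note that both terms are nonnegative by the two inequalities above, which closes the argument.

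There is no genuine obstacle here, as this is a classical result; the only point demanding care is the bookkeeping in part (ii), namely correctly identifying $\|u-x+y-v\|^2$ with $\|a-b\|^2$ and reducing the desired estimate to the single inner-product inequality $\langle u-v,(x-u)-(y-v)\rangle\geq0$. Once the expansion is organized so that the $\|a-b\|^2$ terms cancel, the conclusion follows immediately from (i).
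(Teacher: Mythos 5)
Your proof is correct and follows the standard argument from the cited reference \cite{CAMOT2011} (the paper itself states this lemma without proof): the variational characterization of $P_C$ obtained by perturbing the minimizer along the segment $z+t(y-z)$ and letting $t\to0^+$, and then firm nonexpansiveness in (ii) derived algebraically by applying (i) twice, which is exactly the classical route. The only point worth making explicit is that in the converse direction of (i) you must also assume $z\in C$ (as the lemma implicitly does), since the variational inequality $\langle x-z,y-z\rangle\leq0$ for all $y\in C$ does not by itself place $z$ in $C$.
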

It is immediate from (ii) that $P_C$ is a nonexpansive mapping, i.e.,
$$\|P_C(x)-P_C(y)\|\leq\|x-y\|,\quad \forall x,y\in \mathbb{H}.$$

For solving $EP(f,C)$, one needs to consider some additional properties imposed on $f$ such as convexity and continuity. Now we recall some 
definitions.

\begin{definition}\label{lower-semicontinuous}(See \cite[ Definition 1.21]{CAMOT2011}) A function $f: \mathbb{H}\to [-\infty,+\infty]$ is said to be lower semicontinuous at $x\in \mathbb{H}$ if for every sequence $\{x^k\}$ converging strongly to $x$, it holds that $f(x)\leq\liminf_{k\rightarrow\infty}f(x^k)$. $f$ is upper semicontinuity at $x\in \mathbb{H}$ if $-f$ is lower semicontinuous at $x$. If $f$ is lower semicontinuous  and upper semicontinuous at $x\in \mathbb{H}$, then it is continuous at $x\in \mathbb{H}$. Furthermore, $f$ is continuous on $C$ if it is continuous at each $x\in C\subset\mathbb{H}$.
\end{definition}

\begin{definition}\label{continuity} (See \cite[Definition 2.1]{projection-NEP2016}) 
 A bifunction $f: C \times C \to \mathbb{R} $ is said to be jointly weakly continuous on $C \times C$ if  $x,y\in C$ and $\{x^k\}$ and $\{y^k\}$  are two sequences in $C$ converging weakly to $x$ and $y$, respectively, then $f(x^k,y^k)$ converges to $f(x,y)$.
\end{definition}

\begin{definition}\label{subdifferential} Let $f:\mathbb{H}\times \mathbb{H}\to \mathbb{R}$ be a function such that $f(x,\cdot)$ is convex for all $x \in \mathbb{H}$. For $x,y \in \mathbb{H}$, the subdifferential $\partial_2f(x,y)$ of $f(x,\cdot)$ at $y$ is defined by $$\partial_2f(x,y)=\{\xi\in\mathbb{H}:f(x,z)-f(x,y)\geq\langle\xi,z-y\rangle,\forall z\in \mathbb{H}\}.$$
\end{definition}

For solving $EP(f,C)$, we consider the following assumptions required in the sequel:
\begin{itemize}
 \item[(A1)]$f(x,\cdot)$ is convex on $C$ for all $x \in C$;
 \item[(A2)]$f$ is jointly weakly continuous on $C\times C$;
 \item[(A3)]$S_M\neq\emptyset$.
\end{itemize}

\begin{remark}\label{remark1}
The inclusion $S_M\subset S_E$ holds true when $f(\cdot ,y)$ is upper semicontinuous for each $y \in C$ and $f(x,\cdot)$ is convex and lower semicontinuous for each $x \in C$, while the converse inclusion $S_E\subset S_M$ can be guaranteed by the pseudomonotonicity of $f$ on $C$ (see \cite{sm-se-EP2003,existence-EP2013} and the references quoted therein). Thus assumptions (A1)-(A3) deduce that $S_M\subset S_E$ and $S_E\neq\emptyset$.
\end{remark}

The following lemmas will contribute to presenting the algorithm for solving nonmonotone $EP(f,C)$ and analysing the convergence of the proposed algorithm.

\begin{lemma}\label{lemma.9} (\cite[Proposition 2.1]{auxiliary-EP2003})
Under the assumption (A1), when $\rho>0$, a point $x^*\in C$ is a solution of $EP(f,C)$ if and only if it is a solution to the equilibrium problem:$$Find \ x^*\in C: f(x^*,y)+\frac{\rho}{2}\|y-x^*\|^2\geq0,\quad \forall y\in C.$$
\end{lemma}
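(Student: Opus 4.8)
The plan is to prove the two implications separately, with the forward direction being immediate and the reverse direction carried out by a standard convexity-perturbation argument. For the ``only if'' part, suppose $x^*\in C$ solves $EP(f,C)$, so that $f(x^*,y)\geq 0$ for every $y\in C$. Since $\rho>0$ forces $\frac{\rho}{2}\|y-x^*\|^2\geq 0$, adding this nonnegative quantity preserves the inequality and yields $f(x^*,y)+\frac{\rho}{2}\|y-x^*\|^2\geq 0$ for all $y\in C$. Nothing beyond positivity of $\rho$ is needed here.

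For the converse, I would fix an arbitrary $y\in C$ and exploit the convexity of $C$ to test the hypothesis along the segment joining $x^*$ and $y$. For $t\in(0,1]$ set $y_t:=(1-t)x^*+ty$, which lies in $C$ by convexity, and apply the regularized inequality at $y_t$ to obtain $f(x^*,y_t)+\frac{\rho}{2}\|y_t-x^*\|^2\geq 0$. The two terms then simplify: the quadratic term equals $\frac{\rho}{2}t^2\|y-x^*\|^2$, while assumption (A1) (convexity of $f(x^*,\cdot)$) together with the equilibrium property $f(x^*,x^*)=0$ gives $f(x^*,y_t)\leq (1-t)f(x^*,x^*)+tf(x^*,y)=tf(x^*,y)$.

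Combining these estimates produces $0\leq tf(x^*,y)+\frac{\rho}{2}t^2\|y-x^*\|^2$. Dividing by $t>0$ leaves $0\leq f(x^*,y)+\frac{\rho}{2}t\|y-x^*\|^2$, and letting $t\to 0^+$ makes the remaining quadratic contribution vanish, so that $f(x^*,y)\geq 0$. Since $y\in C$ was arbitrary, $x^*$ solves $EP(f,C)$, which completes the equivalence.

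As for difficulty, there is no genuine obstacle here: the only point that must be handled with care is the scaling mismatch between the two terms, namely that the regularization term scales like $t^2$ while convexity forces the bifunction term to scale like $t$, so that after dividing by $t$ the quadratic part is of order $t$ and disappears in the limit. Everything else is routine, and the argument invokes only assumption (A1) and the defining property $f(x^*,x^*)=0$ of the equilibrium bifunction.
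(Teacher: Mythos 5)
Your proof is correct: the forward direction only uses $\rho>0$, and your converse argument (testing at $y_t=(1-t)x^*+ty$, using (A1) and $f(x^*,x^*)=0$ to get the order-$t$ bound, dividing by $t$ and letting $t\to0^+$) is exactly the standard convexity-perturbation argument. The paper itself states this lemma as a known result cited from Mastroeni's Proposition 2.1 without reproducing a proof, and your argument is essentially the one given in that reference, so there is nothing to fix.
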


\begin{remark}
(See \cite{DENG-HU-FANG2020})
The above equivalence is based on assumption (A1) and $\rho>0$. When at least one of them is not established, i.e., there exists at least one $x_0\in C$ such that $f(x_0,\cdot)$ is not convex on $C$ or $\rho<0$, Bigi and Passacantando \cite{Auxiliary-priciples-EP2017} analysed in detail the conditions that guarantee the aforementioned equivalence and advantages that this equivalence brings.
\end{remark}

The following lemma can be regarded as an infinite-dimension version of Theorem 24.5 in \cite{Rockafellar1970}.
\begin{lemma}\label{lemma.3} (\cite[Proposition 2.1]{J.J.Strodiot-Shrinking-pro-extra-NEP2016})
Under assumptions (A1) and (A2), for $\bar{x}$,$\bar{y}\in C$ and sequences $\{x^k\}$, $\{y^k\}$ in $C$ converging weakly to $\bar{x}$ and $\bar{y}$, respectively, it follows that for any $\varepsilon>0$, there exist $\eta>0$ and $k_\varepsilon\in \mathbb{N}$ such that $$\partial_2f(x^k,y^k)\subset \partial_2f(\bar{x},\bar{y})+\frac{\varepsilon}{\eta}B,$$ for every $k\geq k_\varepsilon$, where $B$ denotes the closed unit ball in $\mathbb{H}$.
\end{lemma}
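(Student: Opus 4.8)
The plan is to observe that, because $\eta>0$ is existentially quantified, the quantity $\frac{\varepsilon}{\eta}$ is simply an adjustable positive radius; consequently the assertion is equivalent to the statement that $\partial_2 f(\bar{x},\bar{y})$ is nonempty and that the family $\{\partial_2 f(x^k,y^k)\}$ is eventually uniformly bounded. Indeed, if I can exhibit a constant $M$ and an index $k_0$ with $\|\xi\|\le M$ for all $\xi\in\partial_2 f(x^k,y^k)$ and all $k\ge k_0$, then fixing any $\bar{\xi}\in\partial_2 f(\bar{x},\bar{y})$ gives $\|\xi-\bar{\xi}\|\le M+\|\bar{\xi}\|=:R$, so that $\partial_2 f(x^k,y^k)\subset\partial_2 f(\bar{x},\bar{y})+R\,B$ for $k\ge k_0$; for a prescribed $\varepsilon>0$ it then suffices to take $\eta=\varepsilon/R$ and $k_\varepsilon=k_0$. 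Thus the entire content of the lemma is the uniform boundedness, which is exactly the infinite-dimensional analogue of the local boundedness of subgradients underlying Theorem 24.5 of \cite{Rockafellar1970}.

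First I would record two preliminary facts. Since weakly convergent sequences are norm bounded (Banach--Steinhaus), both $\{x^k\}$ and $\{y^k\}$ lie in a fixed closed ball. Next, under (A1)--(A2) the map $f(\bar{x},\cdot)$ is convex and continuous (joint weak continuity in particular yields strong sequential continuity, hence continuity), so it is a finite-valued continuous convex function; by the standard subdifferentiability of such a function, $\partial_2 f(\bar{x},\bar{y})\neq\emptyset$ and is bounded, which legitimises the target set $\partial_2 f(\bar{x},\bar{y})+R\,B$ and supplies the point $\bar{\xi}$ used above.

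The heart of the proof is the uniform bound, which I would establish by contradiction. If it fails, then after passing to a subsequence there are $\xi^k\in\partial_2 f(x^k,y^k)$ with $\|\xi^k\|\to\infty$. Writing $u^k=\xi^k/\|\xi^k\|$ and inserting the test point $z=y^k+u^k$ into the subgradient inequality $f(x^k,z)-f(x^k,y^k)\ge\langle\xi^k,z-y^k\rangle$ yields $\|\xi^k\|=\langle\xi^k,u^k\rangle\le f(x^k,y^k+u^k)-f(x^k,y^k)$. By joint weak continuity, $f(x^k,y^k)\to f(\bar{x},\bar{y})$, so that term is bounded. For $f(x^k,y^k+u^k)$, the arguments $(x^k,y^k+u^k)$ form a bounded sequence, and since closed balls in a Hilbert space are weakly sequentially compact, any subsequence admits a further subsequence along which $x^k\rightharpoonup\bar{x}$ and $y^k+u^k\rightharpoonup w$ for some $w$; then (A2) forces $f(x^k,y^k+u^k)\to f(\bar{x},w)$, a finite limit. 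Hence the right-hand side stays bounded while $\|\xi^k\|\to\infty$, a contradiction. This produces $M$ and $k_0$, and the reduction in the first paragraph finishes the argument.

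The main obstacle is precisely this passage to the limit in infinite dimensions: unlike in the finite-dimensional Rockafellar setting one cannot invoke norm compactness of bounded sets, so weak sequential compactness of balls in $\mathbb{H}$ together with the joint weak continuity (A2) is indispensable. Care is also needed so that the auxiliary points $y^k+u^k$ remain in the domain where $f(x^k,\cdot)$ is defined and weakly continuous; I would either work with the extension of $f$ to $\mathbb{H}\times\mathbb{H}$ implicit in Definition \ref{subdifferential}, or, if only $C\times C$ is available, replace $u^k$ by a small feasible direction $t(z^k-y^k)$ with $z^k\in C$ and rescale. A minor point to check is that $M$ can be chosen independently of the extracted subsequence, which is automatic once the contradiction is framed for an arbitrary subsequence.
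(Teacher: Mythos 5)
You should first note that the paper itself contains no proof of this lemma: it is imported, proof and all, by citation to \cite[Proposition 2.1]{J.J.Strodiot-Shrinking-pro-extra-NEP2016}, so your attempt has to be judged against the setting of that source. Your opening reduction is correct and genuinely sharp: because $\eta$ is existentially quantified \emph{after} $\varepsilon$, choosing $\eta=\varepsilon/R$ shows that the statement as literally written is equivalent to eventual uniform boundedness of $\partial_2 f(x^k,y^k)$ together with nonemptiness (and boundedness) of $\partial_2 f(\bar{x},\bar{y})$; and this weaker reading is in fact all the present paper ever uses (Corollary \ref{boundedsub-gk} extracts exactly the boundedness of $\{g^{k_j}\}$). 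So your strategy of proving only uniform boundedness is legitimate for the statement in question.

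The genuine gap is in the crucial step. Your inequality $\|\xi^k\|\le f(x^k,y^k+u^k)-f(x^k,y^k)$ evaluates $f$ at $y^k+u^k$, a point at unit distance from $C$, where none of the paper's hypotheses apply: (A1) gives convexity of $f(x,\cdot)$ only on $C$, and (A2) gives joint weak continuity only along sequences in $C$ with limits in $C$ (your weak limit $w$ of $y^k+u^k$ need not lie in $C$ either, so the claim $f(x^k,y^k+u^k)\to f(\bar{x},w)$ is unsupported). Neither of your proposed repairs closes this. The ``extension implicit in Definition \ref{subdifferential}'' supplies a domain but no continuity or boundedness off $C$: a convex extension can be arbitrarily large at unit distance, and the canonical extension by $+\infty$ makes $\partial_2 f(x,y)$ contain translates by the normal cone $N_C(y)$, which is unbounded, so under a pure $C\times C$ reading the lemma is actually \emph{false} — any correct proof must use that in the cited source the bifunction is defined (convex and jointly weakly continuous) on an open convex set $\Delta\supset C$. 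The feasible-direction fallback $z=y^k+t(z^k-y^k)$ with $z^k\in C$ only bounds $\langle\xi^k,d\rangle$ for directions $d$ pointing from $y^k$ into $C$, which controls $\|\xi^k\|$ only if the $y^k$ stay uniformly in the interior of $C$ — false in general, and $C$ may have empty interior in $\mathbb{H}$. Finally, even granting the $\Delta$-setting, your normalization needs a uniform $\rho>0$ with $B(y^k,\rho)\subset\Delta$ along the sequence, and openness of $\Delta$ alone does not provide this when $\{y^k\}$ converges only weakly (the distance $d(y^k,\mathbb{H}\setminus\Delta)$ can shrink to $0$); the very form of the radius $\varepsilon/\eta$ in the statement suggests the cited proof instead works with uniform estimates on a fixed ball of radius $\eta$ inside $\Delta$, which is a different and more delicate argument than the one you sketch. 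In short: correct and useful reduction, but the uniform-boundedness core — the entire content after your reduction — does not go through from the stated hypotheses, and the difficulty you flagged as ``a minor point of care'' is precisely where the proof lives.
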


\begin{lemma}\label{lemma.7} (\cite[Lemma 2.6]{projection-NEP2016})
Under assumptions (A1) and (A2), if $\{x^k\}\subset C$ is bounded, $\rho>0$, and $\{y^k\}$ is a sequence such that $$y^k=argmin\{f(x^k,y)+\frac{\rho}{2}\|y-x^k\|^2: y\in C\},$$ then $\{y^k\}$ is bounded.
\end{lemma}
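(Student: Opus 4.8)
The plan is to argue by contradiction. Since $f(x^k,\cdot)$ is convex by (A1) and the regularizer $\frac{\rho}{2}\|\cdot-x^k\|^2$ is $\rho$-strongly convex, the objective $g^k(y):=f(x^k,y)+\frac{\rho}{2}\|y-x^k\|^2$ is strongly convex, so the minimizer $y^k$ (assumed to exist in the hypothesis) is unique and, by definition of the argmin,
\[
f(x^k,y^k)+\frac{\rho}{2}\|y^k-x^k\|^2\le f(x^k,z)+\frac{\rho}{2}\|z-x^k\|^2,\qquad\forall z\in C.
\]
Suppose $\{y^k\}$ were unbounded. Passing to a subsequence we may assume $\|y^k\|\to\infty$, and since $\{x^k\}$ is bounded we may pass to a further subsequence along which $x^k\rightharpoonup\bar x\in C$ (recall $C$ is weakly closed). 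Fix once and for all a reference point $\bar y\in C$ and specialize the displayed inequality to $z=\bar y$.

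The crux is to bound $f(x^k,y^k)$ from below by a quantity that is at most linear in $\|y^k\|$, so that the quadratic regularizer dominates. I would achieve this by linearizing $f(x^k,\cdot)$ about the \emph{fixed} point $\bar y$ instead of about the moving point $y^k$: choosing $w^k\in\partial_2 f(x^k,\bar y)$ (nonempty because $f(x^k,\cdot)$ is convex and, by (A2), continuous), Definition~\ref{subdifferential} gives $f(x^k,y^k)-f(x^k,\bar y)\ge\langle w^k,y^k-\bar y\rangle$. The advantage of evaluating the subgradient at $\bar y$ is that Lemma~\ref{lemma.3}, applied with the constant second sequence $y^k\equiv\bar y$ and the weakly convergent first sequence $x^k\rightharpoonup\bar x$, provides a uniform bound $\|w^k\|\le M$ for all sufficiently large $k$.

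Substituting this subgradient estimate into the minimizer inequality with $z=\bar y$, the common term $f(x^k,\bar y)$ cancels and one is left with
\[
\langle w^k,y^k-\bar y\rangle+\frac{\rho}{2}\|y^k-x^k\|^2\le\frac{\rho}{2}\|\bar y-x^k\|^2.
\]
Setting $t_k:=\|y^k-x^k\|$, using the triangle inequality $\|y^k-\bar y\|\le t_k+\|x^k-\bar y\|$ together with the uniform bounds $\|w^k\|\le M$ and $\|x^k-\bar y\|\le R$ (the latter from boundedness of $\{x^k\}$), this rearranges to $\frac{\rho}{2}t_k^2\le M t_k+\big(\frac{\rho}{2}R^2+MR\big)$. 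As the leading coefficient $\frac{\rho}{2}$ is positive, $t_k$ cannot exceed the larger root of the corresponding quadratic, so $\{t_k\}$ is bounded; combined with the boundedness of $\{x^k\}$ this forces $\{y^k\}$ to be bounded, contradicting $\|y^k\|\to\infty$.

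The main obstacle is exactly this lower estimate for $f(x^k,y^k)$: the minimizer inequality used on its own is insufficient, since a priori $f(x^k,y^k)$ could tend to $-\infty$ and absorb the regularizing term. The decisive device is to linearize about the fixed point $\bar y$ and to call on Lemma~\ref{lemma.3} to keep the slopes $w^k$ uniformly bounded along the weakly convergent subsequence; the remainder is the elementary fact that a positive quadratic eventually dominates a linear term.
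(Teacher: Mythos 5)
The paper does not prove this lemma at all---it imports it verbatim as \cite[Lemma 2.6]{projection-NEP2016}---and your argument is correct and essentially reproduces the proof given in that cited source: contradiction via an unbounded subsequence, a weakly convergent subsequence $x^k\rightharpoonup\bar x\in C$, linearization of $f(x^k,\cdot)$ at a fixed reference point with the subgradients $w^k$ kept uniformly bounded by the upper-semicontinuity result recorded here as Lemma~\ref{lemma.3} (whose conclusion bounds $\partial_2 f(x^k,\bar y)$ because $\partial_2 f(\bar x,\bar y)$ is a bounded set), and the observation that the quadratic regularizer dominates the resulting linear bound. No gaps; your passing remark that $\partial_2 f(x^k,\bar y)\neq\emptyset$ by convexity and continuity is exactly the justification the literature uses.
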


\begin{lemma}\label{boundednessofxk}(\cite[Lemma 2.6]{Bello-VI2019})
Let S be a nonempty, closed and convex set. Take $x^{0}, x \in {\mathbb{H}}$. Assume that $x^{0} \notin S$ and that $S \subseteq W(x):=\left\{y \in \mathbb{H}:\left\langle y-x, x^{0}-x\right\rangle \leq 0\right\}$. Then,$x \in B\left[\frac{1}{2}\left(x^{0}+\bar{x}\right), \frac{1}{2} \rho\right]$, where $\bar{x}=P_{S}\left(x^{0}\right) $ and $\rho=\operatorname{d}\left(x^{0}, S\right)=\left\|x^{0}-P_{S}\left(x^{0}\right)\right\|$.
\end{lemma}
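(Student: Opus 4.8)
The plan is to reduce the ball-membership conclusion to a single inner-product inequality and then read that inequality directly off the hypotheses. The essential observation is that the closed ball $B[\frac{1}{2}(x^0+\bar x),\frac{1}{2}\rho]$ with $\rho=\|x^0-\bar x\|$ is precisely the ball whose diameter is the segment $[x^0,\bar x]$; consequently a point $z$ belongs to it if and only if it sees that diameter under a non-acute angle, i.e. $\langle z-x^0,z-\bar x\rangle\le 0$. I would make this rigorous through the elementary identity
$$\Big\langle z-x^0,\,z-\bar x\Big\rangle=\Big\|z-\tfrac{1}{2}(x^0+\bar x)\Big\|^2-\tfrac14\|x^0-\bar x\|^2,$$
valid for every $z\in\mathbb{H}$, which shows that $\|z-\frac12(x^0+\bar x)\|\le\frac12\rho$ is equivalent to $\langle z-x^0,z-\bar x\rangle\le0$. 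So it suffices to establish this inequality at $z=x$.

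First I would invoke the hypotheses to locate $\bar x$. Since $S$ is nonempty, closed and convex, $\bar x=P_S(x^0)$ is well defined and lies in $S$; by the inclusion $S\subseteq W(x)$ we get $\bar x\in W(x)$, which by the very definition of $W(x)$ means $\langle \bar x-x,\,x^0-x\rangle\le0$. Negating both arguments of the inner product (which leaves it unchanged) and using its symmetry rewrites this as $\langle x-x^0,\,x-\bar x\rangle\le0$. Feeding $z=x$ into the identity above then yields $\|x-\frac12(x^0+\bar x)\|^2\le\frac14\rho^2$, that is, $x\in B[\frac12(x^0+\bar x),\frac12\rho]$, as claimed.

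I do not expect any serious obstacle: the entire content is the geometric recognition that the half-space $W(x)$ cuts the chord $[x^0,\bar x]$ in the right way, together with the one-line algebra converting the half-space inequality into the diameter inequality. The only points deserving care are purely bookkeeping, namely checking that negating both entries of the inner product preserves the sign, and noting that the assumption $x^0\notin S$ guarantees $\rho=d(x^0,S)>0$ so that the ball is nondegenerate (though the inequality chain in fact remains valid even when $\rho=0$, in which case it forces $x=x^0$). It is worth emphasizing that the projection characterization in Lemma~\ref{projeciton-property}(i) is not needed for this particular argument; all that is used is the membership $\bar x\in S\subseteq W(x)$.
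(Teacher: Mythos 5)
Your proposal is correct: the identity $\langle z-x^0,z-\bar x\rangle=\|z-\tfrac{1}{2}(x^0+\bar x)\|^2-\tfrac{1}{4}\|x^0-\bar x\|^2$ is easily verified, and combining it at $z=x$ with $\bar x=P_S(x^0)\in S\subseteq W(x)$ gives exactly the required ball membership, with $x^0\notin S$ serving only to make $\rho>0$. Note that the paper itself offers no proof of this lemma, quoting it from \cite[Lemma 2.6]{Bello-VI2019}; your Thales-ball argument is precisely the standard proof of that cited result, so there is nothing further to compare.
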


\begin{lemma} \label{xk1xksolution}(\cite[Proposition 2.12]{Burachik-multiVI2019})
Let $x^{0}, x \in \mathbb{H} $ and $W(x)=\{y \in \mathbb{H}:\langle y-x,x^0-x\leq0\rangle\}$, then it holds that $x=P_{W(x)}\left(x^{0}\right)$.
\end{lemma}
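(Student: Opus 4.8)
The plan is to read off the conclusion directly from the variational characterization of the metric projection in Lemma~\ref{projeciton-property}(i), since $W(x)$ is a half-space and the point $x$ satisfies its defining inequality with equality. First I would observe that $W(x)=\{y\in\mathbb{H}:\langle y-x,\,x^0-x\rangle\leq 0\}$ is a closed convex set (a closed half-space when $x^0\neq x$, and all of $\mathbb{H}$ when $x^0=x$), and that it is nonempty because $x$ itself belongs to $W(x)$: indeed $\langle x-x,\,x^0-x\rangle=0\leq 0$. Hence the metric projection $P_{W(x)}(x^0)$ is well defined and single-valued, and $x$ is a legitimate candidate for it.

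Next I would invoke Lemma~\ref{projeciton-property}(i) with the convex set $W(x)$ and the point being projected equal to $x^0$. According to that characterization, to establish $x=P_{W(x)}(x^0)$ it suffices to verify the optimality inequality
$$\langle x^0-x,\,y-x\rangle\leq 0,\quad \forall y\in W(x).$$
Here the key observation is that, by the symmetry of the inner product, this is literally the same inequality as the one defining membership in $W(x)$, namely $\langle y-x,\,x^0-x\rangle\leq 0$. Therefore every $y\in W(x)$ automatically satisfies the required condition, and the characterization yields $x=P_{W(x)}(x^0)$.

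There is no substantive obstacle in this argument: the whole content is the coincidence between the defining functional of the half-space $W(x)$ and the first-order optimality condition for projecting $x^0$. The only points worth stating explicitly are the membership $x\in W(x)$ (which guarantees the projection can equal $x$) and the degenerate case $x^0=x$, in which $W(x)=\mathbb{H}$ and $P_{\mathbb{H}}(x^0)=x^0=x$, so the conclusion holds trivially and consistently with the general computation.
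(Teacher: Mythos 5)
Your proof is correct and matches the intended argument: the paper states this lemma by citation to \cite[Proposition 2.12]{Burachik-multiVI2019} without reproducing a proof, and the canonical proof is exactly your observation that, by symmetry of the inner product, the inequality defining membership in $W(x)$ coincides with the variational characterization of Lemma~\ref{projeciton-property}(i) for projecting $x^0$ onto $W(x)$. Your explicit checks that $x\in W(x)$ and that the degenerate case $x^0=x$ (where $W(x)=\mathbb{H}$) causes no trouble complete the verification, so nothing is missing.
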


\begin{lemma}\label{whole-strong-convergence} (\cite[Lemma 1.5]{CQ-method-fixed-point-problem2006})
Let $C$ be a nonempty closed convex subset of $\mathbb{H}$, $\{x^k\}\subset \mathbb{H}$ and $u\in \mathbb{H}$. If any weak cluster point of $\{x^k\}$ belongs to $C$ and $$\|x^k-u\|\leq\|u-P_C(u)\|, \quad \forall k\in \mathbb{N},$$ then $x^k\rightarrow P_C(u)$.
\end{lemma}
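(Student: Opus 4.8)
The plan is to set $v := P_C(u)$ and prove the statement in two stages: first show that $\{x^k\}$ converges weakly to $v$, then upgrade this to strong convergence using the parallelogram/inner-product structure of $\mathbb{H}$. The hypothesis $\|x^k-u\|\leq\|u-v\|$ immediately confines the sequence to the closed ball of radius $\|u-v\|$ centered at $u$, so $\{x^k\}$ is bounded. Since $\mathbb{H}$ is a Hilbert space, bounded sequences are relatively weakly compact, so weak cluster points exist, and by hypothesis each of them lies in $C$.

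Next I would identify \emph{every} weak cluster point with $v$. Let $\bar{x}$ be such a point, say $x^{k_j}\rightharpoonup\bar{x}$ with $\bar{x}\in C$. The norm is weakly lower semicontinuous and $x^{k_j}-u\rightharpoonup\bar{x}-u$, so $\|\bar{x}-u\|\leq\liminf_j\|x^{k_j}-u\|\leq\|u-v\|$. On the other hand, since $\bar{x}\in C$ and $v=P_C(u)$ is the nearest point of $C$ to $u$, we have $\|u-v\|=d(u,C)\leq\|u-\bar{x}\|$. Combining the two inequalities gives $\|\bar{x}-u\|=\|u-v\|$; as $\bar{x}\in C$ and the metric projection onto the closed convex set $C$ is single-valued, this forces $\bar{x}=v$. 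A bounded sequence whose only weak cluster point is $v$ converges weakly to $v$, so $x^k\rightharpoonup v$.

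Finally, to obtain strong convergence I would expand
$$\|x^k-v\|^2=\|x^k-u\|^2-2\langle x^k-u,\,v-u\rangle+\|v-u\|^2.$$
The hypothesis gives $\|x^k-u\|^2\leq\|v-u\|^2$, while $x^k\rightharpoonup v$ yields $\langle x^k-u,\,v-u\rangle\to\langle v-u,\,v-u\rangle=\|v-u\|^2$. Taking $\limsup_k$ therefore produces
$$\limsup_k\|x^k-v\|^2\leq\|v-u\|^2-2\|v-u\|^2+\|v-u\|^2=0,$$
so $x^k\to v=P_C(u)$ strongly. The argument is self-contained, and the only genuinely delicate point is this passage from weak to strong convergence: the hypothesized norm bound is precisely what controls $\limsup_k\|x^k-u\|^2$, and when paired with the convergence of the cross term coming from weak convergence it collapses the squared distance to zero. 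I expect that step to be the crux, even though it is short.
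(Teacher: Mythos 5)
The paper does not prove this lemma at all---it imports it verbatim from Martinez-Yanes and Xu \cite{CQ-method-fixed-point-problem2006}, and your argument is correct and essentially reproduces the standard proof given there: identify every weak cluster point with $P_C(u)$ via weak lower semicontinuity of the norm together with uniqueness of the metric projection, then expand $\|x^k-P_C(u)\|^2$ and use the hypothesized norm bound plus the convergence of the cross term to force the $\limsup$ to zero. Every step checks out (boundedness guarantees weak cluster points exist, and the subsequence principle legitimately upgrades the unique cluster point to weak convergence of the whole sequence), so there is nothing to repair.
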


\section{Algorithm for Nonmonotone $EP(f,C)$}
\label{sec:3}

Now, by combining and modifying the algorithms in \cite{projection-NEP2016,J.J.Strodiot-Shrinking-pro-extra-NEP2016,Burachik-multiVI2019},  we propose the following algorithm for solving nonmonotone and non-Lipschitzian  $EP(f,C)$:

{\bf Algorithm 1} \\
{\bf Step 0}\hspace{0.2cm} Given $ \{\beta_{k}\}_{k \in \mathbb{N}} \subset[\breve{\beta}, \hat{\beta}]$ \text
{ such that } $ 0<\breve{\beta} \leq \hat{\beta}<+\infty $. Take $x^0\in C$, choose parameters $\theta \in (0,1)$ and $\delta \in(0,1)$, and set $k=0$.\\
{\bf Step 1}\hspace{0.2cm}Solve the strongly convex program
\begin{equation} \label{strongly-convex-problem}
   \min\{f(x^k,y)+\frac{\beta_{k}}{2}\|y-x^k\|^2:y\in C\}
\end{equation}
to obtain its unique solution $y^k$. If $y^k=x^k$, stop.
Otherwise,do Step 2.\\
{\bf Step 2}\hspace{0.2cm}(Armijo-linesearch) Find $m_k$ as the smallest positive integer $m$ satisfying
\begin{equation}\label{Armijo-linesearch}
    \left\{
             \begin{array}{lr}
             z^{k,m}=(1-\theta^m)x^{k}+\theta^my^{k}, &  \\
             f(z^{k,m},y^k)\leq-\frac{\delta\beta_{k}}{2}\|x^k-y^k\|^2.&
             \end{array}
   \right.
\end{equation}
Set $\theta_k=\theta^{m_k}$, $z^k=z^{k,m_k}$. If $0\in \partial_2f(z^k,z^k)$, stop. Otherwise, go to Step 3. \\
{\bf Step 3}\hspace{0.2cm} Take $g^k\in \partial_2f(z^k,z^k)$ and set
\begin{equation} \label{Hkdefinition}
   H_k=\{x\in \mathbb{H}:\langle g^k,x-z^k\rangle \leq0\}, \ \ \tilde{H}_k=\bigcap_{j=0}^{j=k} {H_j}.
\end{equation}
Define
\begin{equation*}
   W(x^k)=\{x\in \mathbb{H}:\langle x-x^k,x^0-x^k\rangle \leq0\}.
\end{equation*}
{\bf Step 4}\hspace{0.2cm}Compute
\begin{equation}\label{xk1}
   x^{k+1}=P_{C\cap \tilde{H}_k\cap W(x^k)}(x^0).
\end{equation}
If $x^{k+1}=x^{k}$, stop. Otherwise, set $k=k+1$ and go to Step 1.

\begin{remark}
The Armijo-linesearch in Step 2 was introduced by Quoc et al. \cite{D.Q.Tran-Extragradient-EP2008} and it can be also considered as a modification of \cite[Linesearch 2]{J.J.Strodiot-Shrinking-pro-extra-NEP2016}.
\end{remark}

\begin{remark}
 According to Lemma \ref{lemma.9}, there exists a unique solution $y^k$ for the strongly convex program in (\ref{strongly-convex-problem}). Thus it induces from $x^k\neq y^k$ that
\begin{equation}\label{welldefine1}
f(x^k,y^k)+\frac{\beta_{k}}{2}\|y^k-x^k\|^2<0.
\end{equation}
Furthermore, $g^k\neq 0,\forall k\in \mathbb{N}$.
\end{remark}
Particularly, when $f(x, y)=\langle F(x), y-x\rangle$ with $F: C\rightarrow \mathbb{H}$ being a mapping, $EP(f,C)$ collapses to the variational inequality problem $VI(F,C)$. In the case, Algorithm 1 reduces to the following algorithm for solving nonmonotone $VI(F,C)$.

{\bf Algorithm 2} \\
{\bf Step 0}\hspace{0.2cm} Given $ \{\beta_{k}\}_{k \in \mathbb{N}} \subset[\breve{\beta}, \hat{\beta}]$ \text
{ such that } $ 0<\breve{\beta} \leq \hat{\beta}<+\infty $. Take $x^0\in C$, choose parameters $\theta \in (0,1)$ and $\delta \in(0,1)$, and set $k=0$.\\
{\bf Step 1}\hspace{0.2cm} Compute
\begin{equation*}
   y^{k}=P_{C}(x^k-\frac{1}{\beta_{k}}F(x^k)).
\end{equation*}
If $y^k=x^k$, stop. Otherwise, go to Step 2.\\
{\bf Step 2}\hspace{0.2cm}(Armijo-linesearch) Find $m_k$ as the smallest positive integer $m$ satisfying
\begin{equation*}
    \left\{
             \begin{array}{lr}
             z^{k,m}=(1-\theta^m)x^{k}+\theta^my^{k}, &  \\
             \langle F(z^{k,m}), y^k-z^{k,m}\rangle\leq-\frac{\delta}{2\beta_{k}}\|x^k-y^k\|^2.&
             \end{array}
   \right.
\end{equation*}
Set $\theta_k=\theta^{m_k}$, $z^k=z^{k,m_k}$. If $F(z^{k})=0$, stop. Otherwise, go to Step 3. \\
{\bf Step 3}\hspace{0.2cm} Take
\begin{equation*}
   H_k=\{x\in \mathbb{H}:\langle F(z^{k}),x-z^k\rangle \leq0\}, \ \ \tilde{H}_k=\bigcap_{j=0}^{j=k} {H_j}.
\end{equation*}
Define
\begin{equation*}
   W(x^k)=\{x\in \mathbb{H}:\langle x-x^k,x^0-x^k\rangle \leq0\}.
\end{equation*}
{\bf Step 4}\hspace{0.2cm}Compute
\begin{equation*}
   x^{k+1}=P_{C\cap \tilde{H}_k\cap W(x^k)}(x^0).
\end{equation*}
If $x^{k+1}=x^{k}$, stop. Otherwise, set $k=k+1$ and go to Step 1.

\begin{remark}
Algorithm 2 can be viewed as a modification of the  point-to-point version of Algorithm $F$ of Burachik and D\'{\i}az Mill\'{a}n \cite{Burachik-multiVI2019}, in which a different linesearch is used. As a comparison,  Algorithm 2 employs an Armijo-linesearch with norm $\|x^k-y^k\|$, improving the numerical behavior. This will be shown in Section \ref{sec:4}.
\end{remark}

Now we show the validity and convergence of Algorithm 1. 

\begin{lemma} \label{step2well-lemma.2}
Under the assumption $x^k\neq y^k,\forall k\in \mathbb{N}$, the linesearch in Algorithm 1 is well-defined in the sense that, for each $k\in \mathbb{N}$, there exists a positive integer $m>0$ satisfying the inequality in (\ref{Armijo-linesearch}). Furthermore, $H_k (\forall k\in \mathbb{N})$ is nonempty closed convex providing that $S_M\neq\emptyset$.
\end{lemma}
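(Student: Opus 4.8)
The plan is to handle the two assertions separately. For the well-definedness of the linesearch I would argue by contradiction: fix $k$ and suppose that no positive integer $m$ satisfies the second inequality in (\ref{Armijo-linesearch}), so that $f(z^{k,m},y^k) > -\frac{\delta\beta_{k}}{2}\|x^k-y^k\|^2$ holds for every $m$. Since $\theta\in(0,1)$, we have $\theta^m\to 0$ as $m\to\infty$, hence $z^{k,m}=(1-\theta^m)x^k+\theta^m y^k\to x^k$ strongly (and therefore weakly). Keeping the second argument $y^k$ fixed and invoking the joint weak continuity of $f$ from (A2), I would pass to the limit to obtain $f(x^k,y^k)\geq -\frac{\delta\beta_{k}}{2}\|x^k-y^k\|^2$. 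This contradicts (\ref{welldefine1}): indeed, since $x^k\neq y^k$, $\beta_{k}>0$ and $\delta\in(0,1)$, inequality (\ref{welldefine1}) yields $f(x^k,y^k) < -\frac{\beta_{k}}{2}\|x^k-y^k\|^2 \leq -\frac{\delta\beta_{k}}{2}\|x^k-y^k\|^2$. The contradiction shows that such an $m$ exists, so the linesearch terminates after finitely many steps.

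For the second assertion, note first that $H_k$ is a half-space determined by the nonzero vector $g^k$ (recall $g^k\neq 0$ from the preceding remark), so it is automatically closed and convex; the substantive point is nonemptiness, which I would establish through the stronger inclusion $S_M\subseteq H_k$. Pick any $x^*\in S_M$. Because $x^k,y^k\in C$ and $\theta_k\in(0,1]$, convexity of $C$ gives $z^k=(1-\theta_k)x^k+\theta_k y^k\in C$, and the defining property of the Minty solution $x^*$ yields $f(z^k,x^*)\leq 0$. On the other hand, $g^k\in\partial_2 f(z^k,z^k)$ together with $f(z^k,z^k)=0$ gives, by Definition \ref{subdifferential}, $f(z^k,x^*)=f(z^k,x^*)-f(z^k,z^k)\geq\langle g^k,x^*-z^k\rangle$. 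Combining the two inequalities produces $\langle g^k,x^*-z^k\rangle\leq 0$, that is, $x^*\in H_k$. Hence $S_M\subseteq H_k$, and the hypothesis $S_M\neq\emptyset$ forces $H_k\neq\emptyset$.

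The step I expect to require the most care is the limit passage in the first part: one must verify that the strong convergence $z^{k,m}\to x^k$ suffices to invoke (A2) (it does, since strong convergence implies weak convergence and the second variable is held constant), and that the strict inequality in (\ref{welldefine1}) survives the relaxation by the factor $\delta<1$. The inclusion $S_M\subseteq H_k$ obtained in the second part is the key observation that will later guarantee the feasible set $C\cap\tilde{H}_k\cap W(x^k)$ in (\ref{xk1}) is nonempty, so I would record it explicitly rather than merely asserting $H_k\neq\emptyset$.
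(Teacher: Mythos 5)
Your proof is correct and takes essentially the same route as the paper's: the identical contradiction argument for the linesearch, passing to the limit $z^{k,m}\to x^k$ via the joint weak continuity of $f$ and clashing the result against (\ref{welldefine1}), followed by the identical inclusion $S_M\subseteq H_k$ obtained from the Minty property $f(z^k,x^*)\leq 0$ and the subgradient inequality for $g^k\in\partial_2 f(z^k,z^k)$ with $f(z^k,z^k)=0$. The only cosmetic differences are that you close the first part directly from the chain $f(x^k,y^k)<-\frac{\beta_k}{2}\|x^k-y^k\|^2\leq-\frac{\delta\beta_k}{2}\|x^k-y^k\|^2$ instead of the paper's deduction that $\delta>1$, and that you make explicit the small point, left implicit in the paper, that $z^k\in C$ by convexity so the Minty condition applies at $z^k$.
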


\begin{proof}
Firstly, under the assumption $x^k\neq y^k$, we verify that for each $k$ there exists a positive integer $m_0$ such that
$$f(z^{k,m_0},y^k)\leq-\frac{\delta\beta_{k}}{2}\|x^k-y^k\|^2.$$
Indeed, by contradiction, we suppose that for every positive integer $m$ and  $z^{k,m}=(1-\theta^m)x^{k}+\theta^my^{k}$, it holds that
$$f(z^{k,m},y^k)>-\frac{\delta\beta_{k}}{2}\|x^k-y^k\|^2.$$
Since $\{z^{k,m}\}$ converges strongly to $x^k$ as $m\rightarrow \infty$, taking the limit as $m\rightarrow \infty$, from the jointly weak continuity of $f$, we obtain that
\begin{equation} \label{(3.5)}
     f(x^{k},y^k)\geq-\frac{\delta\beta_{k}}{2}\|x^k-y^k\|^2.
\end{equation}
This combining with (\ref{welldefine1}) claims that
\begin{equation}
  -\frac{\delta\beta_{k}}{2}\|x^k-y^k\|^2\leq f(x^k,y^k)<-\frac{\beta_{k}}{2}\|x^k-y^k\|^2.
\end{equation}
The above inequality implies that $\delta>1$. This contradicts the fact that $\delta \in(0,1)$.
Consequently, the Armijo-linesearch is well-defined.

Now we show the nonemptyness of $H_k$. The assumption $S_M\neq\emptyset$ implies that for each $x^*\in S_M$, $f(y,x^*)\leq0,\forall y\in C$ holds true. Thus $f(z^k,x^*)\leq0,\forall k\in \mathbb{N}$. On the other side, from $g^k\in\partial_2f(z^k,z^k)$ and the convexity of $f(z^k,\cdot)$, we get $$f(z^k,y)-f(z^k,z^k)\geq\langle g^k,y-z^k\rangle, \quad \forall y\in C.$$
Therefore, it deduces from $f(z^k,x^*)\leq0,\forall k\in \mathbb{N}$ and $f(z^k,z^k)=0$ that
$$\langle g^k,z^k- x^*\rangle \geq f(z^k,z^k)-f(z^k,x^*)\geq 0,$$ i.e., $x^*\in H_k, \forall k\in \mathbb{N}.$
\end{proof}\qed


\begin{proposition}\label{lemma.10}
The sequences $\{x^k\}$ and $\{y^k\}$ generated by Algorithm 1 satisfy the property:
\begin{equation} \label{(pro3.1)}
   f(x^k,y)\geq f(x^k,y^k)+\beta_{k}\langle x^k-y^k,y-y^k\rangle, \quad \forall y\in C.
\end{equation}
In particular, taking $y=x^k$, we obtain $f(x^k,y^k)+\beta_{k}\|x^k-y^k\|^2\leq0, \forall k\in \mathbb{N}$. Furthermore, if $y^k=x^k$ for some $k\in \mathbb{N}$, then $x^k$ is a solution of $EP(f,C)$.
\end{proposition}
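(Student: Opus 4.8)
The plan is to read the inequality \eqref{(pro3.1)} directly off the first-order optimality condition for the strongly convex subproblem \eqref{strongly-convex-problem}, and then to obtain the two consequences by straightforward substitution.

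The first step is to write down the optimality condition for $y^k$. Since $y^k$ is the unique minimizer over $C$ of the convex function $h(y):=f(x^k,y)+\frac{\beta_{k}}{2}\|y-x^k\|^2$, Fermat's rule applied to $h$ together with the indicator function of $C$ produces a subgradient $\xi\in\partial_2 f(x^k,y^k)$ satisfying
\[
\langle \xi+\beta_{k}(y^k-x^k),\,y-y^k\rangle\geq 0,\qquad \forall y\in C.
\]
Here I would invoke the subdifferential sum rule (Moreau--Rockafellar): because the quadratic term is finite and continuous on all of $\mathbb{H}$ and $f(x^k,\cdot)$ is convex by (A1), the subdifferential of $h$ splits as $\partial_2 f(x^k,y^k)+\beta_{k}(y^k-x^k)$, and adjoining the normal cone to $C$ is justified since $h$ is everywhere continuous. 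This subdifferential calculus in the Hilbert-space setting is the only delicate point; everything else is algebra.

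The second step combines this with the defining inequality of $\partial_2 f(x^k,y^k)$ from Definition \ref{subdifferential}, namely $f(x^k,y)-f(x^k,y^k)\geq\langle\xi,y-y^k\rangle$. Chaining the two estimates gives
\[
f(x^k,y)-f(x^k,y^k)\geq\langle\xi,y-y^k\rangle\geq\beta_{k}\langle x^k-y^k,\,y-y^k\rangle,\qquad\forall y\in C,
\]
which is exactly \eqref{(pro3.1)}.

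For the two consequences I would simply substitute. Taking $y=x^k$ and using the equilibrium property $f(x^k,x^k)=0$ yields $0\geq f(x^k,y^k)+\beta_{k}\|x^k-y^k\|^2$, the stated inequality. Finally, if $y^k=x^k$ for some $k$, then the inner-product term in \eqref{(pro3.1)} vanishes and the inequality reduces to $f(x^k,y)\geq f(x^k,x^k)=0$ for all $y\in C$, so $x^k\in S_E$ solves $EP(f,C)$. (Equivalently, $y^k=x^k$ makes $x^k$ a minimizer of $f(x^k,y)+\frac{\beta_{k}}{2}\|y-x^k\|^2$ over $C$, whence this quantity is nonnegative on $C$ and Lemma \ref{lemma.9} identifies $x^k$ as a solution of $EP(f,C)$.)
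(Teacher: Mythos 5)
Your proposal is correct and follows essentially the same route as the paper: both derive \eqref{(pro3.1)} from the first-order optimality condition $0\in\partial_2 f(x^k,y^k)+\beta_k(y^k-x^k)+N_C(y^k)$ for the subproblem \eqref{strongly-convex-problem}, chain the subgradient inequality for $\xi\in\partial_2 f(x^k,y^k)$ with the normal-cone inequality, and then obtain the two consequences by substituting $y=x^k$ and using $f(x^k,x^k)=0$. Your explicit appeal to the Moreau--Rockafellar sum rule is a welcome justification that the paper leaves implicit, but it does not change the argument.
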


\begin{proof}
It follows from $y^k= argmin\{f(x^k,y)+\frac{\beta_{k}}{2}\|y-x^k\|^2:y\in C\}$ that
$$ 0\in \partial_2f(x^k,y^k)+\beta_{k}(y^k-x^k)+N_C(y^k),$$
where $N_C(y^k)$ is the normal cone of $C$ at $y^k$ defined by $$N_C(y^k)=\{\omega\in \mathbb{R}^n:\langle \omega,y-y^k\rangle\leq0, \forall y\in C\}.$$
Namely, there exist $\xi\in \partial_2f(x^k,y^k)$ and $\bar{\omega}\in N_C(y^k)$ such that $$0=\xi+\beta_{k}(y^k-x^k)+\bar{\omega}.$$

On the one hand, $\xi\in \partial_2f(x^k,y^k)$ implies that
\begin{equation} \label{lemma.10(1)}
   f(x^k,y)\geq f(x^k,y^k)+\langle \xi,y-y^k\rangle,\quad \forall y\in C.
\end{equation}
On the other hand, it induces from $\bar{\omega}\in N_C(y^k)$ that
\begin{equation} \label{lemma.10(2)}
   \langle\bar{\omega},y-y^k\rangle\leq0, \quad \forall y\in C.
\end{equation}
Noting that $\bar{\omega}=\beta_{k}(x^k-y^k)-\xi$ and combining with (\ref{lemma.10(2)}), we have
\begin{equation} \label{lemma.10(3)}
\langle\beta_{k}(x^k-y^k)-\xi,y-y^k\rangle\leq0,\quad \forall y\in C.
\end{equation}

From (\ref{lemma.10(1)}) and (\ref{lemma.10(3)}), we get
\begin{equation*} \label{lemma.10(4)}
   f(x^k,y)\geq f(x^k,y^k)+\langle \xi,y-y^k\rangle
   \geq f(x^k,y^k)+\beta_{k}\langle x^k-y^k,y-y^k\rangle,\quad \forall y\in C.
\end{equation*}
Taking $y=x^k$, we obtain
\begin{equation} \label{lemma.10(4)}
   f(x^k,y^k)+\beta_{k}\| x^k-y^k\|^2\leq0.
\end{equation}
Particularly, if $y^k=x^k$ for some $k\in \mathbb{N}$, by (\ref{(pro3.1)}), $x^k$ is a solution of $EP(f,C)$.
\end{proof}\qed

\begin{proposition} \cite[Lemma 4.1]{D.Q.Tran-Extragradient-EP2008}
If $0\in \partial_2f(z^k,z^k)$, then $z^k$ is a solution of $EP(f,C)$.
\end{proposition}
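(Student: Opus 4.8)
The plan is to unwind the definition of the partial subdifferential $\partial_2 f$ and then exploit the equilibrium property $f(z^k,z^k)=0$; the argument is essentially immediate once these two ingredients are combined. First I would record that $z^k$ is indeed a feasible candidate. By construction in Step 2, $z^k=(1-\theta_k)x^k+\theta_k y^k$ is a convex combination of $x^k$ and $y^k$, both of which lie in $C$ (the latter being the solution of the strongly convex program over $C$). Since $C$ is convex, $z^k\in C$, so it makes sense to ask whether $z^k$ solves $EP(f,C)$.

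Next I would invoke Definition \ref{subdifferential}. The hypothesis $0\in\partial_2 f(z^k,z^k)$ says precisely that the zero vector is a subgradient of the convex function $f(z^k,\cdot)$ at the point $z^k$; that is, taking $\xi=0$ in the defining inequality yields
$$f(z^k,y)-f(z^k,z^k)\geq\langle 0,\,y-z^k\rangle=0,\quad\forall y\in C.$$
Finally, using that $f$ is an equilibrium bifunction, so that $f(z^k,z^k)=0$, the inequality collapses to
$$f(z^k,y)\geq 0,\quad\forall y\in C,$$
which is exactly the statement that $z^k\in S_E$, i.e. $z^k$ is a solution of $EP(f,C)$.

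I do not anticipate any genuine obstacle here: the conclusion follows directly from the definitions. The only point requiring a word of care is that the subdifferential in Definition \ref{subdifferential} is formulated with test points ranging over all of $\mathbb{H}$, whereas $EP(f,C)$ only demands the inequality for $y\in C$; restricting the subgradient inequality to $y\in C$ is harmless and furnishes exactly what is needed. It is assumption (A1), guaranteeing that $f(z^k,\cdot)$ is convex on $C$, that makes $\partial_2 f(z^k,z^k)$ the appropriate object to invoke and legitimizes reading the membership $0\in\partial_2 f(z^k,z^k)$ as the above first-order optimality inequality.
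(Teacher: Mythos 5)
Your proof is correct and is precisely the standard argument: the paper itself offers no proof, simply citing \cite[Lemma 4.1]{D.Q.Tran-Extragradient-EP2008}, and the proof there is exactly your one-line unwinding of $0\in\partial_2 f(z^k,z^k)$ via Definition~\ref{subdifferential} together with $f(z^k,z^k)=0$. Your added remarks --- that $z^k\in C$ by convexity of $C$ since $z^k$ is a convex combination of $x^k,y^k\in C$, and that restricting the subgradient inequality from $\mathbb{H}$ to $C$ is harmless --- are the right points of care and introduce no gap.
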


\begin{proposition}
If $x^{k+1}=x^k$, then $x^k$ is a solution of $EP(f,C)$.
\end{proposition}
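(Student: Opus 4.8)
The plan is to show that the assumption $x^{k+1}=x^{k}$ forces $y^{k}=x^{k}$, after which Proposition \ref{lemma.10} immediately gives that $x^{k}\in S_E$. The starting observation is that $x^{k+1}$, being the projection $P_{C\cap\tilde{H}_k\cap W(x^k)}(x^0)$, necessarily lies in the set onto which it is projected; in particular $x^{k+1}\in \tilde{H}_k\subseteq H_k$. Using the hypothesis $x^{k+1}=x^k$ this yields $x^k\in H_k$, that is, $\langle g^k, x^k-z^k\rangle\le 0$.

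Next I would exploit the explicit convex-combination form of $z^k$. Since $z^k=(1-\theta_k)x^k+\theta_k y^k$ we have $x^k-z^k=\theta_k(x^k-y^k)$, and because $\theta_k=\theta^{m_k}>0$ the previous inequality reduces to $\langle g^k, y^k-x^k\rangle\ge 0$. Now invoking $g^k\in\partial_2 f(z^k,z^k)$ together with $f(z^k,z^k)=0$ (Definition \ref{subdifferential}) gives $f(z^k,y^k)\ge \langle g^k, y^k-z^k\rangle = (1-\theta_k)\langle g^k, y^k-x^k\rangle\ge 0$, where I used once more that $z^k-y^k=(1-\theta_k)(x^k-y^k)$ and that $1-\theta_k>0$.

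Finally I would confront this with the Armijo condition (\ref{Armijo-linesearch}), which guarantees $f(z^k,y^k)\le -\tfrac{\delta\beta_k}{2}\|x^k-y^k\|^2$. Combining the two bounds produces $0\le f(z^k,y^k)\le -\tfrac{\delta\beta_k}{2}\|x^k-y^k\|^2\le 0$; since $\delta>0$ and $\beta_k\ge\breve{\beta}>0$, this is possible only if $\|x^k-y^k\|=0$, i.e.\ $x^k=y^k$. An application of Proposition \ref{lemma.10} then completes the argument.

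The proof is short, and the only real subtlety --- the \emph{main step} --- is the passage from the single membership $x^k\in H_k$ to the equality $x^k=y^k$: the key is to notice that the half-space $H_k$ is built from the subgradient $g^k$ taken at $z^k$, so that membership in $H_k$ can be converted, through the subdifferential inequality, into a lower bound $f(z^k,y^k)\ge 0$ that directly contradicts the strict descent enforced by the linesearch unless $x^k$ and $y^k$ coincide. Everything else (that the projection lands in its target set, and that $\theta_k$ and $\beta_k$ are positive) is routine.
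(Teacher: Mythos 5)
Your proof is correct and follows essentially the same route as the paper's: from $x^{k+1}=x^k$ you deduce $x^k\in H_k$, use the convex-combination structure of $z^k$ together with the subgradient inequality for $g^k\in\partial_2 f(z^k,z^k)$ to get $f(z^k,y^k)\ge 0$, contradict the Armijo bound unless $x^k=y^k$, and conclude via Proposition \ref{lemma.10}. The paper's argument is the identical chain of inequalities, merely written as a single two-sided estimate $0\le\langle g^k,y^k-z^k\rangle\le f(z^k,y^k)\le-\tfrac{\delta\beta_k}{2}\|x^k-y^k\|^2\le 0$.
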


\begin{proof}
If $x^{k+1}=P_{C\cap \tilde{H}_k\cap W(x^k)}(x^0)=x^k$, then $x^k \in \tilde{H}_k\subset{H}_k$.
On the one hand, it deduces from $x^k\in {H}_k=\{x\in \mathbb{H}:\langle g^k,x-z^k\rangle \leq0\}$ with $g^k\in \partial_2f(z^k,z^k)$ that $$\theta_k\langle g^k,x^k-y^k\rangle=\langle g^k,x^k-z^k\rangle \leq0.$$
Since $\theta_k\in (0,1)$, it follows that
\begin{equation} \label{xk1xkproof1}
\langle g^k,y^k-z^k\rangle=(1-\theta_k)\langle g^k,y^k-x^k\rangle \geq0.
\end{equation}
By virtue of the linesearch (\ref{Armijo-linesearch}),
we get that
\begin{equation} \label{xk1xkproof2}
f(z^{k},y^k)\leq \frac{-\delta\beta_{k}}{2}\| x^k-y^k\|^2\leq0.
\end{equation}
On the other hand, by definition of $g^k \in \partial_2f(z^k,z^k)$, we have $$\langle g^k,y^k-z^k\rangle \leq f(z^{k},y^k)-f(z^{k},z^k)=f(z^{k},y^k).$$
This result combining with (\ref{xk1xkproof1}) and (\ref{xk1xkproof2}) claims that
$$0\leq \langle g^k,y^k-z^k\rangle \leq f(z^{k},y^k)\leq \frac{-\delta\beta_{k}}{2}\| x^k-y^k\|^2\leq0$$
Hence, $x^k=y^k$. By Proposition \ref{lemma.10}, $x^k$ is a solution of $EP(f,C)$.
\end{proof}


From now on, we assume that $x^k\neq y^k, \forall k\in \mathbb{N}$ and thus $\{x^k\}$ generated by Algorithm 1 is infinite. We proceed to show the properties of  $\{x^k\}$.

\begin{proposition}\label{lemma.8}
Let $\bar{x}$ be a weak cluster point of the sequence $\{x^k\}$ generated by Algorithm 1 and $\{x^{k_j}\}$ be the corresponding subsequence converging  weakly to $\bar{x}$. We have $\bar{x}\in \tilde{H}_k\cap C, \forall k\in \mathbb{N}$ and thus $\bar{x}\in (\cap _{k=0}^\infty H_k)\cap C$.
\end{proposition}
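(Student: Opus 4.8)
The plan is to exploit the nested structure of the sets $\tilde{H}_k$ together with the fact that closed convex sets in a Hilbert space are weakly sequentially closed. The two things to establish are $\bar{x}\in C$ and $\bar{x}\in\tilde{H}_k$ for every fixed $k$; the inclusion $\bar{x}\in(\cap_{k=0}^\infty H_k)\cap C$ is then immediate since $\cap_{k=0}^\infty\tilde{H}_k=\cap_{k=0}^\infty H_k$.

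First I would record the membership facts coming directly from the algorithm. By Step 0 we have $x^0\in C$, and for every $m$ the point $x^{m+1}=P_{C\cap\tilde{H}_m\cap W(x^m)}(x^0)$ defined in (\ref{xk1}) lies in particular in $C$, so the whole sequence $\{x^k\}$ is contained in $C$. Since $C$ is nonempty closed and convex, it is weakly closed, and therefore the weak convergence $x^{k_j}\rightharpoonup\bar{x}$ forces $\bar{x}\in C$.

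Next I would fix an arbitrary index $k\in\mathbb{N}$ and show $\bar{x}\in\tilde{H}_k$. Each $H_j$ is a closed half-space, so $\tilde{H}_k=\cap_{j=0}^k H_j$ is closed and convex, hence weakly closed. The decisive observation is the monotonicity of the running intersections: for every $m\ge k$ one has $\tilde{H}_m=\cap_{j=0}^m H_j\subseteq\cap_{j=0}^k H_j=\tilde{H}_k$. Because $x^{m+1}\in\tilde{H}_m$ by the definition of the projection in (\ref{xk1}), it follows that $x^{m+1}\in\tilde{H}_k$ for all $m\ge k$; that is, the entire tail of the sequence from index $k+1$ onward lies in the fixed set $\tilde{H}_k$. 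Passing to the subsequence, for $j$ large enough we have $k_j\ge k+1$, so $x^{k_j}\in\tilde{H}_k$, and the weak closedness of $\tilde{H}_k$ yields $\bar{x}\in\tilde{H}_k$. As $k$ was arbitrary, $\bar{x}\in\cap_{k=0}^\infty\tilde{H}_k=\cap_{k=0}^\infty H_k$, which together with $\bar{x}\in C$ completes the argument.

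The proof is largely bookkeeping once the correct ingredients are assembled, and I do not expect any serious computational obstacle. The one point demanding care is the interplay between the fixed target index $k$ and the running index $m$: one must use the nesting $\tilde{H}_m\subseteq\tilde{H}_k$ to confirm that the \emph{tail} of $\{x^k\}$ already sits inside the \emph{fixed} set $\tilde{H}_k$ before invoking weak closedness, rather than carelessly passing the limit through the varying sets $\tilde{H}_m$. The only genuinely nontrivial fact being used is that closed convex sets are weakly sequentially closed, which is standard in Hilbert spaces.
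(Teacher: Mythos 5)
Your proof is correct and uses essentially the same ingredients as the paper's: the nesting $\tilde{H}_m\subseteq\tilde{H}_k$ for $m\ge k$ together with $x^{m+1}\in\tilde{H}_m$ from the projection step (\ref{xk1}), the weak closedness of closed convex sets, and $\{x^k\}\subset C$. The only difference is presentational --- the paper wraps the tail-in-$\tilde{H}_{k_0}$ argument in a proof by contradiction, whereas you argue directly (and, if anything, more cleanly).
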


\begin{proof}
By contradiction, we assume that there exists $k_0$ such that $\bar{x}\notin \tilde{H}_{k_0}$. The closedness and convexity of $\tilde{H}_{k_0}$ imply the weak closedness of $\tilde{H}_{k_0}$. It deduces from the weak closedness of $\tilde{H}_{k_0}$ that there exists $k_{1}>k_0$ such that $$x^{k}\notin \tilde{H}_{k_0}, \forall k\geq k_{1}.$$
Particularly, $x^{k_{1}}\notin \tilde{H}_{k_0}$. This contradicts the fact that $$x^{k_{1}}\in \tilde{H}_{k_{1}-1}\subset \tilde{H}_{k_{1}-2}\subset \cdots \subset \tilde{H}_{k_0}.$$
It follows that $\bar{x}\in \tilde{H}_k, \forall k\in \mathbb{N}$.

Moreover, it obtains from the closedness and convexity of $C$ that $C$ is weakly closed. Thus the closedness of $C$ implies that $\bar{x}\in C$. Consequently, we get the result that $$\bar{x}\in (\cap _{k=0}^\infty H_k)\cap C.$$
\end{proof}

\begin{remark}
Similar results  for different algorithms  have been established  in \cite[ Proposition 3.3, Proposition 3.4 and Proposition 3.7]{J.J.Strodiot-Shrinking-pro-extra-NEP2016} and \cite[Proposition 2]{DENG-HU-FANG2020}.
\end{remark}

\begin{proposition}\label{pro5}
Let $\tilde{H}_{k}$ be defined as in (\ref{Hkdefinition}) and set $\tilde{S}_{E}=\cap _{k=0}^\infty \tilde{H}_{k}\cap {S_E}$.
The following properties hold:
\begin{itemize}
\item[(i)]$\tilde{S}_{E}\neq\emptyset$;
\item[(ii)]$\tilde{S}_{E}\subset H_k \cap W(x^k), \forall k\in \mathbb{N};$
\item[(iii)]The sequence $\{x^k\}$ generated by Algorithm 1 is well-defined and $\{x^k\}\subset C$.
\end{itemize}
\end{proposition}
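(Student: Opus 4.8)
The plan is to prove the three items together, because item (iii) (well-definedness) is precisely what makes the projection in (\ref{xk1}) meaningful and so what gives content to (ii). The engine is Lemma~\ref{step2well-lemma.2}, which already establishes $S_M\subset H_k$ for every $k$, together with Remark~\ref{remark1}, which gives $S_M\subset S_E$ and $S_M\neq\emptyset$ under (A1)--(A3).

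For (i), I would first note that the sets $\tilde H_k=\cap_{j=0}^{k}H_j$ are nested decreasing, so $\cap_{k=0}^{\infty}\tilde H_k=\cap_{j=0}^{\infty}H_j$. Lemma~\ref{step2well-lemma.2} yields $S_M\subset H_j$ for all $j$, hence $S_M\subset\cap_{j=0}^{\infty}H_j=\cap_{k=0}^{\infty}\tilde H_k$. Combining with $S_M\subset S_E$ gives $S_M\subset\tilde S_E$, and since $S_M\neq\emptyset$ we conclude $\tilde S_E\neq\emptyset$.

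For (ii) and (iii), I would run a single induction on $k$ proving the stronger claim
\[
\tilde S_E\subset C\cap\tilde H_k\cap W(x^k)\qquad\text{for every }k\in\mathbb N,
\]
deducing along the way that $x^{k+1}$ is well-defined and lies in $C$. For the base case $k=0$, observe $W(x^0)=\{x:\langle x-x^0,x^0-x^0\rangle\le0\}=\mathbb H$, while $\tilde S_E\subset S_E\subset C$ and $\tilde S_E\subset\tilde H_0=H_0$ by (i); hence $\tilde S_E\subset C\cap\tilde H_0\cap W(x^0)$, and $x^0\in C$ by Step~0. For the inductive step, assume $\tilde S_E\subset C\cap\tilde H_k\cap W(x^k)$. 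This set is the intersection of the closed convex set $C$ with finitely many closed half-spaces, hence closed and convex, and it is nonempty because it contains $\tilde S_E\neq\emptyset$; therefore the projection $x^{k+1}=P_{C\cap\tilde H_k\cap W(x^k)}(x^0)$ in (\ref{xk1}) exists and lies in $C\cap\tilde H_k\cap W(x^k)\subset C$. Applying Lemma~\ref{projeciton-property}(i) to $x^{k+1}=P_{C\cap\tilde H_k\cap W(x^k)}(x^0)$, for every $y\in C\cap\tilde H_k\cap W(x^k)$ one has $\langle x^0-x^{k+1},y-x^{k+1}\rangle\le0$, i.e.\ $y\in W(x^{k+1})$. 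Thus $C\cap\tilde H_k\cap W(x^k)\subset W(x^{k+1})$, and in particular $\tilde S_E\subset W(x^{k+1})$. Since also $\tilde S_E\subset C$ and $\tilde S_E\subset\tilde H_{k+1}$ (again from (i)), we obtain $\tilde S_E\subset C\cap\tilde H_{k+1}\cap W(x^{k+1})$, closing the induction.

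The induction simultaneously delivers (ii) --- the inclusion $\tilde S_E\subset H_k\cap W(x^k)$ follows from $\tilde S_E\subset\tilde H_k\subset H_k$ together with $\tilde S_E\subset W(x^k)$ --- and (iii), namely that every projection step is legitimate and $\{x^k\}\subset C$. The point that needs care, and which I view as the main obstacle, is the circularity between (ii) and (iii): the projection $x^{k+1}$ only makes sense once $C\cap\tilde H_k\cap W(x^k)$ is known to be nonempty, and that nonemptyness is exactly the inductive inclusion at index $k$. Folding both statements into one induction is what keeps the argument honest, and the use of Lemma~\ref{projeciton-property}(i) to pass from ``$x^{k+1}$ is the projection of $x^0$'' to ``the whole feasible set sits inside $W(x^{k+1})$'' is the decisive step.
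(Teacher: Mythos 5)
Your proof is correct and follows essentially the same route as the paper: item (i) via $S_M\subset H_k$ (Lemma \ref{step2well-lemma.2}) together with $S_M\subset S_E$ and $S_M\neq\emptyset$ (Remark \ref{remark1}, (A3)), and items (ii)--(iii) by induction on $k$ using Lemma \ref{projeciton-property}(i) to get $\tilde S_E\subset W(x^{k+1})$. The only difference is presentational: the paper proves (ii) by induction first (its step (\ref{induction22}) already supplies, implicitly, the nonemptyness that legitimizes the projection at stage $k$) and then records well-definedness separately as (iii), whereas you fold both into a single induction --- a tidier handling of the circularity you flag, but not a genuinely different argument.
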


\begin{proof}
(i) It deduces from the definition of $\tilde{H}_{k}$ that
\begin{equation} \label{HK}
\cap _{k=0}^\infty \tilde{H}_{k}=\cap _{k=0}^\infty {H_k}.
\end{equation}
From Remark \ref{remark1}, we get that $S_M\subset S_E$. As shown in the proof of Lemma \ref{step2well-lemma.2}, it holds that $S_M\subset H_k, \forall k\in \mathbb{N}$. Combining with (\ref{HK}), we obtain that
\begin{equation} \label{nonempty}
S_M\subset \cap _{k=0}^\infty {H_k}\cap S_E=\cap _{k=0}^\infty \tilde{H}_{k}\cap S_E=\tilde{S}_{E}.
\end{equation}
The above inclusion and the assumption $S_M\neq\emptyset$ imply that $\tilde{S}_{E}\neq\emptyset$.

(ii) By definition, it holds that $\tilde{S}_{E}\subset H_k \cap C, \forall k\in \mathbb{N}$. By induction we proceed to verify that $\tilde{S}_{E}\subset W(x^k), \forall k\in \mathbb{N}$. For $k=0$, we have $\tilde{S}_{E}\subset W(x^0)=\mathbb{H}$. Suppose that
\begin{equation}\label{induction1}
\tilde{S}_{E}\subset W(x^k).
\end{equation}
By definition of $\tilde{S}_{E}$, we obtain that
\begin{equation}\label{induction2}
\tilde{S}_{E} \subset \tilde{H}_{k} \cap S_{E}.
\end{equation}
Hence, we deduce from (\ref{induction1}) and (\ref{induction2}) that
\begin{equation}\label{induction22}
\tilde{S}_{E} \subset \tilde{H}_{k} \cap W\left(x^{k}\right) \cap S_{E}\subset \tilde{H}_{k} \cap W\left(x^{k}\right) \cap C.
\end{equation}
By the part (i) of Lemma \ref{projeciton-property}, we obtain from $x^{k+1}=P_{C \cap \tilde{H}_{k} \cap W\left(x^{k}\right)}\left(x^{0}\right)$
that
\begin{equation}\label{induction3}
\left\langle x_{*}-x^{k+1}, x^{0}-x^{k+1}\right\rangle \leq 0,\quad \forall x_{*}\in \tilde{H}_{k} \cap W\left(x^{k}\right) \cap C.
\end{equation}
It follows from (\ref{induction22}) and (\ref{induction3}) that
\begin{equation}\label{induction4}
\left\langle x_{*}-x^{k+1}, x^{0}-x^{k+1}\right\rangle \leq 0,\quad \forall x_{*}\in \tilde{S}_{E}.
\end{equation}
By definition, the above inequality implies that $$\tilde{S}_{E}\subset W(x^{k+1}).$$ Consequently, $\tilde{S}_{E}\subset W(x^k), \forall k\in \mathbb{N}$.

(iii) Combing the above part (ii) with the fact that $S_E\subset C$, we have $$\tilde{S}_{E}\subset C\cap \tilde{H}_{k} \cap W(x^k), \forall k\in \mathbb{N}.$$ By virtue of the fact that $C$, $H_k$ and $W(x^k)$ ($\forall k\in \mathbb{N}$) are closed convex sets, the aforementioned part (i) claims that the closed convex set $C\cap H_k \cap W(x^k)\neq\emptyset, \forall k\in \mathbb{N}$ and thus $$C\cap \tilde{H}_{k} \cap W(x^k)\neq\emptyset, \forall k\in \mathbb{N}.$$ This implies that the projection step in (\ref{xk1}) is well-defined. The result that $x^{k} \in C, \forall k \in \mathbb{N}$ follows from $x^{0} \in C$ and the iterations in (\ref{xk1}).
\end{proof}

Now we show the boundedness of the sequences generated by Algorithm 1.
\begin{proposition} \label{boundedpro}
The sequence generated by  Algorithm 1 satisfies that $\{x^{k}\}\subset B\left[\frac{1}{2}\left(x^{0}+\bar{x}\right), \frac{\rho}{2}\right]$, where $\bar{x}:=P_{S_{M}}\left(x^{0}\right) $ and $\rho=\left\|x^{0}-P_{S_{M}}\left(x^{0}\right)\right\|$. Therefore, the sequence $\{x^{k}\}$
is bounded.
\end{proposition}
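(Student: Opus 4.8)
The plan is to apply Lemma~\ref{boundednessofxk} with $S = S_M$ and $x = x^k$ for each fixed $k \in \mathbb{N}$. Since the ball $B[\frac{1}{2}(x^0 + \bar{x}), \frac{1}{2}\rho]$ produced by that lemma depends only on $x^0$ and on $S_M$ (through $\bar{x} = P_{S_M}(x^0)$ and $\rho = d(x^0, S_M)$), and not on the index $k$, every iterate is forced into one and the same ball, so the claimed inclusion and the boundedness of $\{x^k\}$ both follow at once. The real work is therefore to verify the three hypotheses of Lemma~\ref{boundednessofxk}: that $S_M$ is nonempty, closed and convex; that $x^0 \notin S_M$; and that $S_M \subseteq W(x^k)$ for every $k$.

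First I would settle the geometric structure of $S_M$. Nonemptiness is exactly (A3). Writing $S_M = C \cap \bigcap_{y \in C}\{u \in \mathbb{H} : f(y,u) \leq 0\}$, assumption (A1) makes each $f(y,\cdot)$ convex, so each $0$-sublevel set is convex, while assumption (A2) makes each $f(y,\cdot)$ weakly, hence strongly, continuous, so each such sublevel set is closed; intersecting these with the closed convex set $C$ over all $y \in C$ keeps $S_M$ closed and convex, as Lemma~\ref{boundednessofxk} requires of $S$.

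Next I would check $x^0 \notin S_M$, which is where the standing nonstationarity assumption $x^0 \neq y^0$ is used. If $x^0$ solved $EP(f,C)$, then by Lemma~\ref{lemma.9} (with $\rho = \beta_0 > 0$) the map $y \mapsto f(x^0,y) + \frac{\beta_0}{2}\|y - x^0\|^2$ would be nonnegative on $C$ and would vanish at $y = x^0$, so $x^0$ would minimize the strongly convex program (\ref{strongly-convex-problem}); uniqueness of that minimizer then forces $y^0 = x^0$, contradicting $x^0 \neq y^0$. Hence $x^0 \notin S_E$, and since (A1)--(A3) give $S_M \subseteq S_E$ by Remark~\ref{remark1}, also $x^0 \notin S_M$. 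The third hypothesis is already in hand: by (\ref{nonempty}) we have $S_M \subseteq \tilde{S}_E$, and Proposition~\ref{pro5}(ii) gives $\tilde{S}_E \subseteq W(x^k)$ for every $k$, so $S_M \subseteq W(x^k)$ for all $k$ (including $k=0$, where $W(x^0) = \mathbb{H}$ trivially).

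With all hypotheses verified, Lemma~\ref{boundednessofxk} applied at each $x = x^k$ yields $x^k \in B[\frac{1}{2}(x^0 + \bar{x}), \frac{1}{2}\rho]$ with $\bar{x} = P_{S_M}(x^0)$ and $\rho = \|x^0 - P_{S_M}(x^0)\|$, which is precisely the assertion, and boundedness is immediate because the radius $\frac{1}{2}\rho$ is finite. I expect the only genuine friction to lie in the two hypotheses that are not simply quoted from earlier results, namely the closedness and convexity of $S_M$ and the upgrade of $x^0 \neq y^0$ to $x^0 \notin S_M$; everything else is bookkeeping of inclusions already proved in Proposition~\ref{pro5}.
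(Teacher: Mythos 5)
Your proposal is correct and follows essentially the same route as the paper, which likewise obtains the result by applying Lemma~\ref{boundednessofxk} with $S=S_M$ and $x=x^k$ for every $k$. The only difference is that you explicitly verify the hypotheses (closedness and convexity of $S_M$ via (A1)--(A2), $x^0\notin S_M$ via $x^0\neq y^0$ and Lemma~\ref{lemma.9}, and $S_M\subseteq W(x^k)$ via (\ref{nonempty}) and Proposition~\ref{pro5}(ii)), which the paper's one-line proof simply asserts.
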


\begin{proof}
Since $S_{M}$ is a nonempty, convex and closed set and $x^{0} \notin S_{M}$, by Lemma \ref{boundednessofxk}, we get the result with setting $S=S_{M}$ and $x=x^{k}$, $\forall k \in \mathbb{N}$.
\end{proof}

\begin{corollary}\label{boundedsub-gk}
The sequences $\{y^{k}\}$ and $\{z^{k}\}$ generated by Algorithm 1 are bounded and $\{g^{k}\}$ admits a bounded subsequence.
\end{corollary}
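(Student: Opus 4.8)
The plan is to treat the three sequences in the order $\{y^k\}$, $\{z^k\}$, $\{g^k\}$, deriving each one's boundedness from the previous, since boundedness of $\{x^k\}$ is already in hand from Proposition~\ref{boundedpro}. The first genuine step is $\{y^k\}$. Because each $y^k$ is by construction the unique minimizer of the strongly convex program $\min\{f(x^k,y)+\frac{\beta_{k}}{2}\|y-x^k\|^2:y\in C\}$, and $\{x^k\}\subset C$ is bounded with $\beta_{k}\in[\breve{\beta},\hat{\beta}]$ bounded away from $0$, I would invoke Lemma~\ref{lemma.7} directly to conclude that $\{y^k\}$ is bounded. (If one prefers to sidestep the varying $\beta_k$, one can observe that the argument of Lemma~\ref{lemma.7} only uses $\beta_k\geq\breve{\beta}>0$.)

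For $\{z^k\}$ the step is purely algebraic: $z^k=(1-\theta_k)x^k+\theta_k y^k$ is a convex combination with $\theta_k\in(0,1)$, so $\|z^k\|\leq(1-\theta_k)\|x^k\|+\theta_k\|y^k\|\leq\sup_k\|x^k\|+\sup_k\|y^k\|$, and boundedness of $\{z^k\}$ follows at once from that of $\{x^k\}$ and $\{y^k\}$. Since $C$ is convex and $x^k,y^k\in C$, I would also record the fact that $z^k\in C$, which is needed in the next step.

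The substantive step, and the one I expect to be the main obstacle, is producing a bounded subsequence of $\{g^k\}$, because the $g^k$ are merely selected subgradients and carry no a priori norm bound. Here I would exploit boundedness of $\{z^k\}$ to extract a subsequence $z^{k_j}\rightharpoonup\bar{z}$; weak closedness of $C$ gives $\bar{z}\in C$. Applying Lemma~\ref{lemma.3} with both argument-sequences taken equal to $\{z^{k_j}\}$ and $\bar{x}=\bar{y}=\bar{z}$, and fixing $\varepsilon=1$, yields $\eta>0$ and an index $j_0$ with $g^{k_j}\in\partial_2f(z^{k_j},z^{k_j})\subset\partial_2f(\bar{z},\bar{z})+\frac{1}{\eta}B$ for all $j\geq j_0$. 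The crux is then that $\partial_2f(\bar{z},\bar{z})$ is bounded: assumption (A2) makes $f(\bar{z},\cdot)$ continuous at $\bar{z}$, and the subdifferential of a finite convex function at a point of continuity is nonempty and bounded, so $\partial_2f(\bar{z},\bar{z})+\frac{1}{\eta}B$ is a bounded set. Thus $\|g^{k_j}\|$ is uniformly bounded for $j\geq j_0$, and absorbing the finitely many earlier terms shows that $\{g^{k_j}\}$ is bounded, giving the required bounded subsequence.

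The delicate points to verify carefully are that Lemma~\ref{lemma.3} still applies when the two argument-sequences coincide (it does, since its hypotheses only demand weak convergence of each sequence separately) and that local boundedness of the subdifferential genuinely follows from continuity in the present real-valued Hilbert-space setting; once these two facts are settled, the remainder of the argument is routine.
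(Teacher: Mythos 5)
Your proposal is correct and follows essentially the same route as the paper's proof: boundedness of $\{x^k\}$ from Proposition~\ref{boundedpro}, then Lemma~\ref{lemma.7} for $\{y^k\}$, the convex-combination bound for $\{z^k\}$, and extraction of a weakly convergent subsequence $z^{k_j}\rightharpoonup z^*$ to which Lemma~\ref{lemma.3} is applied for $\{g^{k_j}\}$. The only difference is that you spell out the step the paper leaves implicit in citing Lemma~\ref{lemma.3}, namely that $\partial_2 f(z^*,z^*)$ is bounded because the subdifferential of a real-valued convex function is bounded at a point of continuity, which is a welcome clarification rather than a different argument.
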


\begin{proof}
By Proposition \ref{boundedpro}, $\{x^k\}$ is bounded. By Lemma \ref{lemma.7}, $\{y^k\}$ is bounded and so $\{z^k\}$ is bounded according to the definition of $z^k$.
Thus there exists a subsequence $\{z^{k_j}\}\subset \{z^{k}\}$ such that $z^{k_j}\rightharpoonup z^*$ as $j\rightarrow\infty$. By Lemma \ref{lemma.3},
$\{g^{k_j}\}$ is bounded.
\end{proof}

\begin{proposition}\label{boundedxkk}
The sequence $\{x^{k}\}$ generated by Algorithm 1 satisfies that
$\sum_{k=0}^{\infty}\left\|x^{k+1}-x^{k}\right\|^{2}<\infty$. Hence, $\lim _{k \rightarrow \infty}\left\|x^{k+1}-x^{k}\right\|=0$.
\end{proposition}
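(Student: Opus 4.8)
The plan is to exploit the characterization $x^k = P_{W(x^k)}(x^0)$ provided by Lemma \ref{xk1xksolution} together with the fact that each iterate $x^{k+1}$ already lies in $W(x^k)$, and to run a telescoping/monotonicity argument on the scalar sequence $\{\|x^0 - x^k\|^2\}$. First I would record the two structural observations. By Lemma \ref{xk1xksolution}, with $x = x^k$, we have $x^k = P_{W(x^k)}(x^0)$ for every $k \in \mathbb{N}$. On the other hand, since
\begin{equation*}
x^{k+1} = P_{C \cap \tilde{H}_k \cap W(x^k)}(x^0) \in C \cap \tilde{H}_k \cap W(x^k) \subset W(x^k),
\end{equation*}
the next iterate is a feasible competitor for the projection onto $W(x^k)$.

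Next I would derive a Pythagorean-type inequality. Applying Lemma \ref{projeciton-property}(i) to $z = x^k = P_{W(x^k)}(x^0)$ and the point $w := x^{k+1} \in W(x^k)$ yields $\langle x^0 - x^k, x^{k+1} - x^k\rangle \leq 0$. Expanding the square and using this inner-product bound gives
\begin{equation*}
\|x^0 - x^{k+1}\|^2 = \|x^0 - x^k\|^2 + 2\langle x^0 - x^k, x^k - x^{k+1}\rangle + \|x^k - x^{k+1}\|^2 \geq \|x^0 - x^k\|^2 + \|x^{k+1} - x^k\|^2.
\end{equation*}
Two consequences follow immediately: the sequence $\{\|x^0 - x^k\|^2\}$ is nondecreasing, and, after rearranging, $\|x^{k+1} - x^k\|^2 \leq \|x^0 - x^{k+1}\|^2 - \|x^0 - x^k\|^2$.

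Finally I would sum the last inequality over $k = 0, \dots, N$. The right-hand side telescopes to $\|x^0 - x^{N+1}\|^2 - \|x^0 - x^0\|^2 = \|x^0 - x^{N+1}\|^2$, so that $\sum_{k=0}^{N} \|x^{k+1} - x^k\|^2 \leq \|x^0 - x^{N+1}\|^2$. By Proposition \ref{boundedpro} the sequence $\{x^k\}$ lies in the bounded ball $B[\tfrac{1}{2}(x^0 + \bar{x}), \tfrac{\rho}{2}]$, hence $\{\|x^0 - x^{N+1}\|^2\}$ is bounded above uniformly in $N$. Since every summand is nonnegative, the partial sums are bounded and monotone, so the series converges: $\sum_{k=0}^{\infty} \|x^{k+1} - x^k\|^2 < \infty$. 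The tail of a convergent series tends to zero, which forces $\lim_{k \to \infty}\|x^{k+1} - x^k\| = 0$.

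I do not expect a serious obstacle here, as the argument is essentially the standard monotonicity mechanism behind CQ/projection methods; the only point requiring care is the correct identification $x^k = P_{W(x^k)}(x^0)$ (Lemma \ref{xk1xksolution}) combined with the inclusion $x^{k+1} \in W(x^k)$, since it is this pairing — projecting the fixed anchor $x^0$ rather than the current point — that makes $\{\|x^0 - x^k\|^2\}$ monotone and lets the telescoping go through without invoking Fej\'{e}r monotonicity. The boundedness input from Proposition \ref{boundedpro} is what converts monotonicity into summability, so I would make sure to cite it explicitly when bounding the telescoped right-hand side.
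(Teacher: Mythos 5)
Your proof is correct and follows essentially the same route as the paper: both use Lemma \ref{xk1xksolution} to identify $x^k = P_{W(x^k)}(x^0)$, note $x^{k+1} \in W(x^k)$ from the projection step, derive the inequality $\|x^{k+1}-x^k\|^2 \leq \|x^{k+1}-x^0\|^2 - \|x^k-x^0\|^2$, and conclude by telescoping plus the boundedness from Proposition \ref{boundedpro}. The only cosmetic difference is that you derive the Pythagorean inequality from Lemma \ref{projeciton-property}(i), whereas the paper obtains it by citing part (ii) with $C = W(x^k)$, $x = x^0$, $y = x^{k+1}$.
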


\begin{proof}
By Lemma \ref{xk1xksolution}, for all $k\in \mathbb{N}$, $x^k=P_{W(x^k)}(x^0)$ and thus $x^k\in W(x^k)$.
From the projection step (\ref{xk1}), $x^{k+1}\in W(x^k)$ holds true. Now we obtain from (ii) of Lemma \ref{projeciton-property} that $$0 \leq\left\|x^{k+1}-x^{k}\right\|^{2} \leq\left\|x^{k+1}-x^{0}\right\|^{2}-\left\|x^{k}-x^{0}\right\|^{2}.$$
Summing this inequality from $k=0$ to $\infty $ and employing the boundedness of the sequence $\{x^{k}\}$ obtained from
Proposition \ref{boundedpro}, we deduce that $$\sum_{k=0}^{\infty}\left\|x^{k+1}-x^{k}\right\|^{2}<\infty.$$The result that $\lim _{k \rightarrow \infty}\left\|x^{k+1}-x^{k}\right\|=0$ follows.
\end{proof}

Now we continue to show the convergence of Algorithm 1.
\begin{theorem}\label{THEOREM}
Let $\{x^k\}$ be the sequence generated by Algorithm 1 and $Cl\left(x^k\right)_{k\in \mathbb{N}}$
be the set of its weak cluster points. Then $Cl\left(x^k\right)_{k\in \mathbb{N}}\subset \tilde{S}_{E}\subset S_E$.
\end{theorem}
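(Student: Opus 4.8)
The plan is to reduce the statement to a single assertion about weak cluster points and then prove that assertion. The inclusion $\tilde{S}_{E}\subset S_E$ is immediate from the definition $\tilde{S}_{E}=\cap_{k=0}^\infty\tilde{H}_k\cap S_E$, so the entire content is $Cl(x^k)_{k\in\mathbb{N}}\subset\tilde{S}_{E}$. I would fix an arbitrary weak cluster point $\bar{x}$ with a subsequence $x^{k_j}\rightharpoonup\bar{x}$. Proposition \ref{lemma.8} already gives $\bar{x}\in(\cap_{k=0}^\infty\tilde{H}_k)\cap C$, so it only remains to prove $\bar{x}\in S_E$, i.e. $f(\bar{x},y)\geq0$ for all $y\in C$; combining this with $\bar{x}\in C$ and $\bar{x}\in\cap_k\tilde{H}_k$ then yields $\bar{x}\in\tilde{S}_{E}$.

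The crux is to establish $\|x^{k_j}-y^{k_j}\|\to0$. By Corollary \ref{boundedsub-gk} the sequences $\{y^{k_j}\},\{z^{k_j}\}$ are bounded and, after passing to a further subsequence (not relabelled, still weakly converging to $\bar{x}$), I may assume $\{g^{k_j}\}$ is bounded, $y^{k_j}\rightharpoonup\bar{y}$, $\|x^{k_j}-y^{k_j}\|\to\ell$, $\beta_{k_j}\to\beta_*\in[\breve{\beta},\hat{\beta}]$ and $\theta_{k_j}\to\theta_*\in[0,1]$, and I would split on $\theta_*$. In the case $\theta_*>0$: since $x^{k+1}\in\tilde{H}_k\subset H_k$, substituting $x^{k+1}-z^k=(x^{k+1}-x^k)+\theta_k(x^k-y^k)$ into $\langle g^k,x^{k+1}-z^k\rangle\leq0$ gives $\theta_k\langle g^k,x^k-y^k\rangle\leq\|g^k\|\,\|x^{k+1}-x^k\|$; meanwhile the subgradient inequality for $g^k\in\partial_2f(z^k,z^k)$, the linesearch (\ref{Armijo-linesearch}), and $y^k-z^k=-(1-\theta_k)(x^k-y^k)$ give $\langle g^k,x^k-y^k\rangle\geq\frac{\delta\beta_k}{2}\|x^k-y^k\|^2$. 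Chaining these yields $\theta_k\frac{\delta\beta_k}{2}\|x^k-y^k\|^2\leq\|g^k\|\,\|x^{k+1}-x^k\|$, and since $\theta_{k_j}\to\theta_*>0$, $\beta_{k_j}\geq\breve{\beta}$, $\{g^{k_j}\}$ is bounded and $\|x^{k_j+1}-x^{k_j}\|\to0$ by Proposition \ref{boundedxkk}, I conclude $\ell=0$.

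In the remaining case $\theta_*=0$, one has $m_{k_j}\to\infty$, hence $m_{k_j}\geq2$ eventually and the stepsize $m_{k_j}-1$ fails the test, i.e. $f(z^{k_j,m_{k_j}-1},y^{k_j})>-\frac{\delta\beta_{k_j}}{2}\|x^{k_j}-y^{k_j}\|^2$. Because $z^{k_j,m_{k_j}-1}-x^{k_j}=(\theta_{k_j}/\theta)(y^{k_j}-x^{k_j})\to0$ strongly, I get $z^{k_j,m_{k_j}-1}\rightharpoonup\bar{x}$, so passing to the limit via joint weak continuity (A2) gives $f(\bar{x},\bar{y})\geq-\frac{\delta\beta_*}{2}\ell^2$. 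On the other hand the descent inequality $f(x^{k_j},y^{k_j})+\beta_{k_j}\|x^{k_j}-y^{k_j}\|^2\leq0$ from Proposition \ref{lemma.10} passes to the limit as $f(\bar{x},\bar{y})\leq-\beta_*\ell^2$. Sandwiching and dividing by $\beta_*>0$ forces $\ell^2\leq\frac{\delta}{2}\ell^2$, which with $\delta\in(0,1)$ again gives $\ell=0$. In either case $\|x^{k_j}-y^{k_j}\|\to0$, so $y^{k_j}\rightharpoonup\bar{x}$ too, and I would finish by letting $j\to\infty$ in (\ref{(pro3.1)}): joint weak continuity gives $f(x^{k_j},y)\to f(\bar{x},y)$ and $f(x^{k_j},y^{k_j})\to f(\bar{x},\bar{x})=0$, while $\beta_{k_j}\langle x^{k_j}-y^{k_j},y-y^{k_j}\rangle\to0$ since $\|x^{k_j}-y^{k_j}\|\to0$ and the other factors are bounded, yielding $f(\bar{x},y)\geq0$ for all $y\in C$, i.e. $\bar{x}\in S_E$.

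I expect the main obstacle to be the vanishing-stepsize case $\theta_*=0$: there the hyperplane estimate is unavailable, and establishing $\ell=0$ instead depends on squeezing $f(\bar{x},\bar{y})$ between the limit of the \emph{failed} linesearch inequality and the limit of the descent inequality from the strongly convex subproblem, with the contradiction obtained precisely from $\delta<1$. A secondary point requiring care is that all limit extractions ($\bar{y}$, $\ell$, $\beta_*$, $\theta_*$, boundedness of $g^{k_j}$) be done along one common further subsequence of $\{x^{k_j}\}$, which is legitimate since passing to such a subsequence preserves the weak limit $\bar{x}$ and hence the conclusion $\bar{x}\in\tilde{S}_{E}$.
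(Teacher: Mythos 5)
Your proof is correct, and structurally it is the same argument as the paper's: the same hyperplane inequality $\langle g^k,x^{k+1}-z^k\rangle\leq 0$ decomposed via $x^{k+1}-z^k=(x^{k+1}-x^k)+\theta_k(x^k-y^k)$, the same use of Proposition \ref{boundedxkk} together with the bounded subsequence of $\{g^k\}$ from Corollary \ref{boundedsub-gk}, and the same dichotomy on the stepsizes --- your split on the limit $\theta_*$ of a convergent subsequence of $\{\theta_{k_j}\}$ is equivalent to the paper's split into $\limsup_k\theta_k>0$ versus $\lim_k\theta_k=0$ --- with the vanishing-stepsize case resolved, exactly as in the paper, by sandwiching the limit of the \emph{failed} linesearch inequality at step $m_{k_j}-1$ against the descent inequality of Proposition \ref{lemma.10} and using $\delta\in(0,1)$ (your explicit remark that $m_{k_j}\to\infty$ guarantees $m_{k_j}\geq 2$ eventually, and your insistence on one common further subsequence, are points the paper glosses over). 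The one genuine divergence is the endgame. Once $\|x^{k_j}-y^{k_j}\|\to 0$, the paper passes to the limit in the quadratic optimality inequality $f(x^{k_j},y)+\frac{\beta_{k_j}}{2}\|y-x^{k_j}\|^2\geq f(x^{k_j},y^{k_j})+\frac{\beta_{k_j}}{2}\|y^{k_j}-x^{k_j}\|^2$ and then invokes the auxiliary-problem characterization of Lemma \ref{lemma.9}, whereas you pass to the limit in the linearized inequality (\ref{(pro3.1)}) and obtain $f(\bar{x},y)\geq 0$ directly, with no appeal to Lemma \ref{lemma.9}. Your variant is actually the more robust one in this infinite-dimensional setting: under merely weak convergence $x^{k_j}\rightharpoonup x^*$ the term $\|y-x^{k_j}\|^2$ need not converge to $\|y-x^*\|^2$ (the norm is only weakly lower semicontinuous, and in the unhelpful direction), so the paper's limit passage needs additional justification, while your term $\beta_{k_j}\langle x^{k_j}-y^{k_j},y-y^{k_j}\rangle$ vanishes outright because $\|x^{k_j}-y^{k_j}\|\to 0$, $\{\beta_{k_j}\}$ is bounded and $\{y^{k_j}\}$ is bounded. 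A last, harmless simplification on your side: your estimate $\langle g^k,x^k-y^k\rangle\geq\frac{\delta\beta_k}{2}\|x^k-y^k\|^2$ discards the factor $\frac{1}{1-\theta_k}\geq 1$ kept in the paper's (\ref{THEOREM1PRO6}), which costs nothing since only positivity and a uniform lower bound are used.
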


\begin{proof}
By Proposition \ref{lemma.8} and (\ref{HK}), we obtain that $Cl\left(x^k\right)_{k\in \mathbb{N}}\subset \cap _{k=0}^\infty H_k=\cap _{k=0}^\infty \tilde{H}_{k}$. By definition of $\tilde{S}_{E}$, it remains to show that
$Cl\left(x^k\right)_{k\in \mathbb{N}}\subset S_E$.
For each $x^* \in Cl\left(x^k\right)_{k\in \mathbb{N}}$, there exists a subsequence of $\{x^k\}$ (again denoted by $\{x^k\}$) converging weakly to $x^*$, i.e., $x^k\rightharpoonup x^*$ as $k\rightarrow\infty$. By virtue of the projection step (\ref{xk1}), we have $x^{k+1}\in H_k, \forall k\in\mathbb{N}$. It follows from the definition of $H_k$ in (\ref{Hkdefinition}) that
\begin{equation}\label{THEOREM1PRO1}
\left\langle g^{k}, x^{k+1}-z^{k}\right\rangle \leq 0, \forall k\in\mathbb{N}.
\end{equation}
On the other side,
\begin{equation}\label{THEOREM1PRO2}
\begin{aligned}
\left\langle g^{k}, x^{k+1}-z^{k}\right\rangle &=\left\langle g^{k}, x^{k+1}-x^{k}+x^{k}-z^{k}\right\rangle\\
&=\left\langle g^{k}, x^{k+1}-x^{k}\right\rangle+\left\langle g^{k}, x^{k}-z^{k}\right\rangle\\
&=\left\langle g^{k}, x^{k+1}-x^{k}\right\rangle+\theta_k\left\langle g^{k}, x^{k}-y^{k}\right\rangle,
\end{aligned}
\end{equation}
where the second equality is obtained from the definition of $z^{k}$ in (\ref{Armijo-linesearch}).
Thus combining (\ref{THEOREM1PRO1}) with (\ref{THEOREM1PRO2}), we obtain that
\begin{equation}\label{THEOREM1PRO3}
\theta_k\left\langle g^{k}, x^{k}-y^{k}\right\rangle\leq \left\langle g^{k}, x^{k}-x^{k+1}\right\rangle, \forall k\in\mathbb{N}.
\end{equation}
On the other hand, by Corollary \ref{boundedsub-gk}, there exists a bounded subsequence of $\{g^k\}$ (again denoted by $\{g^k\}$). Thus we deduce from Cauchy-Schwartz inequality and Proposition \ref{boundedxkk} that
\begin{equation}\label{THEOREM1PRO4}
\left\langle g^{k}, x^{k}-x^{k+1}\right\rangle\leq \|g^k\|\|x^{k}-x^{k+1}\|\rightarrow0,\quad \text{as}\ k\rightarrow\infty.
\end{equation}
Furthermore, according to the Algorithm 1, for all $k\in\mathbb{N}$, it follows from the definition of $g^k \in \partial_2f(z^k,z^k)$, the linesearch in (\ref{Armijo-linesearch}) and $x^k\neq y^k$ that
\begin{equation}\label{THEOREM1PRO5}
\begin{aligned}
(1-\theta_k)\left\langle g^{k}, y^{k}-x^{k}\right\rangle =\left\langle g^{k}, y^{k}-z^{k}\right\rangle &\leq f(z^{k},y^{k})-f(z^{k},z^{k})\\
&=f(z^{k},y^{k})\\
&\leq-\frac{\delta\beta_{k}}{2}\|x^k-y^k\|^2<0.
\end{aligned}
\end{equation}
This combining with $\theta_{k} \in(0,1)$ demonstrates that
\begin{equation}\label{THEOREM1PRO6}
\theta_k \left\langle g^{k}, x^{k}-y^{k}\right\rangle \geq \frac{\theta_k}{1-\theta_k}\frac{\delta\beta_{k}}{2}\|x^k-y^k\|^2>0.
\end{equation}
By virtue of (\ref{THEOREM1PRO3}), (\ref{THEOREM1PRO4}) and (\ref{THEOREM1PRO6}), we get
\begin{equation}
\frac{\theta_k}{1-\theta_k}\frac{\delta\beta_{k}}{2}\|x^k-y^k\|^2\rightarrow0,\quad \text{as}\ k\rightarrow\infty.
\end{equation}
Since $\{\beta_{k}\}$ is bounded and $\delta\in(0,1)$, it follows that
\begin{equation}\label{THEOREM1PRO7}
\frac{\theta_k}{1-\theta_k}\|x^k-y^k\|^2\rightarrow0,\quad \text{as}\ k\rightarrow\infty.
\end{equation}

Now we consider two distinct cases:

Case 1: $\limsup _{k\rightarrow \infty}\theta_{k}>0$. This deduces that there exist $\tilde{\theta}>0$ and a subsequence $\{\theta_{k_j}\}$ of the sequence $\{\theta_{k}\}$ such that $\theta_{k_j}>\tilde{\theta},\forall j\in \mathbb{N}$. On the one hand, according to (\ref{THEOREM1PRO7}), we obtain
$$\|x^{k_j}-y^{k_j}\|\rightarrow 0, \ as \ j\rightarrow\infty.$$
Combining this result with $x^{k_j}\rightharpoonup x^*$, we obtain $y^{k_j}\rightharpoonup x^*$.
From the definition of $y^{k_j}$, we get
\begin{equation*}\label{TH1case1(1)}
f(x^{k_j},y)+\frac{\beta_{k_j}}{2}\|y-x^{k_j}\|^2\geq f(x^{k_j},y^{k_j})+\frac{\beta_{k_j}}{2}\|y^{k_j}-x^{k_j}\|^2,\quad \forall y\in C.
\end{equation*}
On the other hand, by the boundedness of $\{\beta_{k}\}$, we may assume that there exists a subsequence of $\{\beta_{k_j}\}$ (again denoted by $\{\beta_{k_j}\}$) such that $\beta_{k_j}\rightarrow \tilde{\beta}>0$ as $j\rightarrow \infty$.

Taking the limit in the above inequality as  $j\rightarrow\infty$, from $x^{k_j}\rightharpoonup x^*$, $y^{k_j}\rightharpoonup x^*$, $\beta_{k_j}\rightarrow \tilde{\beta}>0$ and the jointly weak continuity of $f$, we have
\begin{equation*}\label{TH1case1(2)}
f(x^{*},y)+\frac{\tilde{\beta}}{2}\|y-x^{*}\|^2\geq f(x^*,x^*)=0,\quad \forall y\in C.
\end{equation*}
By Lemma \ref{lemma.9}, $x^*$ is a solution of $EP(f,C)$.

Case 2: $\lim _{k\rightarrow \infty}\theta_{k}=0$. By Corollary \ref{boundedsub-gk}, $\{y^{k}\}$ is bounded. Without loss of generality, we suppose that there exists a subsequence $\{y^{k_j}\}$ of $\{y^{k}\}$ such that $y^{k_j}\rightharpoonup y^*$ for some $y^* \in \mathbb{H}$ as $j\rightarrow \infty$. It is immediate from $\lim _{k\rightarrow \infty}\theta_{k}=0$ that $\lim _{j\rightarrow \infty}\frac{\theta_{k_j}}{\theta}=\lim _{j\rightarrow \infty} \theta^{m_{k_j}-1}=0$. On the one side, it follows from Proposition \ref{lemma.10} that
\begin{equation} \label{TH1case2.1}
   f(x^{k_j},y^{k_j})+\beta_{k_j}\|y^{k_j}-x^{k_j}\|^2\leq0.
\end{equation}

In addition, according to Algorithm 1, $m_{k_j-1}$ does not satisfy the Armijo-linesearch in (\ref{Armijo-linesearch}), i.e.,
\begin{equation} \label{TH1case2.2}
  f(z^{{k_j},m_{k_j-1}},y^{k_j})>-\frac{\delta\beta_{k_j}}{2}\|x^{k_j}-y^{k_j}\|^2, \quad \forall j\in \mathbb{N}.
\end{equation}

From (\ref{TH1case2.2}) and (\ref{TH1case2.1}), we obtain
\begin{eqnarray} \label{TH1case2.3}
\begin{aligned}
   f(z^{k_j,m_{k_j-1}},y^{k_j})&> -\frac{\delta\beta_{k_j}}{2}\|x^{k_j}-y^{k_j}\|^2\\
  &\geq \frac{\delta}{2}f(x^{k_j},y^{k_j}), \quad \forall j\in \mathbb{N}.
\end{aligned}
\end{eqnarray}

On the other side, according to Algorithm 1, $$z^{k_j,m_{k_j-1}}=(1-\theta^{m_{k_j-1}})x^{k_j}+\theta^{m_{k_j-1}}y^{k_j}.$$
Since $x^{k_j}\rightharpoonup x^*$, $y^{k_j}\rightharpoonup y^*$ and $\theta^{m_{k_j-1}}\rightarrow 0$, as $j\rightarrow \infty$, it holds that $z^{k_j,m_{k_j-1}}\rightharpoonup x^*$ as $j\rightarrow \infty$.
Moreover, the boundedness of $\{x^{k_j}\}$ and  $\{y^{k_j}\}$ induces that $\{\|x^{k_j}-y^{k_j}\|\}$ is also bounded. Hence, there exists a subsequence of $\{\|x^{k_j}-y^{k_j}\|\}$ (again denoted by $\{\|x^{k_j}-y^{k_j}\|\}$) converging to some $a\geq0$. In addition, by the boundedness of $\{\beta_{k}\}$, we may assume that there exists a subsequence $\{\beta_{k_j}\}$ such that $\beta_{k_j}\rightarrow \tilde{\beta}>0$ as $j\rightarrow \infty$.

Taking the limit in (\ref{TH1case2.3}), combining the continuity assumption with the result that $z^{k_j,m_{k_j-1}}\rightharpoonup x^*$, $x^{k_j}\rightharpoonup x^*$, $y^{k_j}\rightharpoonup y^*$, $\beta_{k_j}\rightarrow \tilde{\beta}>0$ and $\theta^{m_{k_j-1}}\rightarrow 0$ , we obtain that
\begin{equation*}
   f(x^*,y^*)\geq(-\frac{\tilde{\beta}}{2})a^2\geq \frac{\delta}{2} f(x^*,y^*).
\end{equation*}
This combining with $\delta\in(0,1)$ implies that $f(x^*,y^*)=0$ and $a=\lim_{j\rightarrow\infty}\|x^{k_j}-y^{k_j}\|=0$. By the case 1, $x^*$ is a solution of $EP(f,C)$.
Consequently, from the arbitrariness of $x^*$, we obtain $Cl\left(x^k\right)_{k\in \mathbb{N}}\subset S_E$. The result that $Cl\left(x^k\right)_{k\in \mathbb{N}}\subset \tilde{S}_{E}\subset S_E$ follows.
\end{proof}

\begin{theorem}\label{theorem3}
The sequence $\{x^k\}$ generated by Algorithm 1 converges strongly to a solution $x^*=P_{\cap _{k=0}^\infty {W(x^k)}}(x^0)$ of $EP(f,C)$ providing the assumptions (A1)-(A3).
\end{theorem}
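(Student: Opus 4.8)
The plan is to invoke Lemma~\ref{whole-strong-convergence} with the anchor point $u=x^0$ and the closed convex set $W:=\cap_{k=0}^{\infty}W(x^k)$, so that the promised limit is exactly $P_W(x^0)$. First I would record that $W$ is closed and convex, being an intersection of the half-spaces $W(x^k)$, and that it is nonempty: by Proposition~\ref{pro5}(ii) we have $\tilde{S}_E\subset W(x^k)$ for every $k$, whence $\tilde{S}_E\subset W$, and $\tilde{S}_E\neq\emptyset$ by Proposition~\ref{pro5}(i).

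Next I would verify the two hypotheses of Lemma~\ref{whole-strong-convergence}. For the cluster-point condition, Theorem~\ref{THEOREM} gives $Cl\left(x^k\right)_{k\in\mathbb{N}}\subset\tilde{S}_E$, and since $\tilde{S}_E\subset W$ as noted above, every weak cluster point of $\{x^k\}$ lies in $W$. For the norm estimate, the crucial observation is that each iterate is itself a projection of $x^0$: by Lemma~\ref{xk1xksolution}, $x^k=P_{W(x^k)}(x^0)$, so $\|x^0-x^k\|=d(x^0,W(x^k))$. Because $W\subseteq W(x^k)$, the distance to the larger set is no larger, i.e. $d(x^0,W(x^k))\leq d(x^0,W)=\|x^0-P_W(x^0)\|$. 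Hence $\|x^k-x^0\|\leq\|x^0-P_W(x^0)\|$ for every $k\in\mathbb{N}$, which is precisely the inequality required by Lemma~\ref{whole-strong-convergence}.

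With both hypotheses in hand, Lemma~\ref{whole-strong-convergence} yields $x^k\rightarrow P_W(x^0)=:x^*$, establishing strong convergence to the claimed point. Finally I would confirm that $x^*$ solves $EP(f,C)$: being the strong limit, $x^*$ is in particular a weak cluster point of $\{x^k\}$, so $x^*\in Cl\left(x^k\right)_{k\in\mathbb{N}}\subset\tilde{S}_E\subset S_E$ by Theorem~\ref{THEOREM}, and thus $x^*$ is a solution of $EP(f,C)$.

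The step I expect to carry the real weight is the norm estimate in the second paragraph. The whole scheme hinges on recognizing, via Lemma~\ref{xk1xksolution}, that each $x^k$ is the metric projection of the fixed anchor $x^0$ onto $W(x^k)$, combined with the monotonicity of the distance function under the inclusion $W\subseteq W(x^k)$. This is exactly the mechanism (projecting the fixed point $x^0$ rather than the current iterate) that lets one bypass Fej\'er monotonicity; getting the direction of the projection inequality right, comparing $P_{W(x^k)}(x^0)$ against $P_W(x^0)$ for the nested pair, is the point most likely to trip up a careless argument.
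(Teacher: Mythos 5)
Your proposal is correct and follows essentially the same route as the paper: both verify the two hypotheses of Lemma~\ref{whole-strong-convergence} with anchor $u=x^0$ and target set $\cap_{k=0}^{\infty}W(x^k)$, using Theorem~\ref{THEOREM} together with Proposition~\ref{pro5} for the cluster-point condition and Lemma~\ref{xk1xksolution} (i.e.\ $x^k=P_{W(x^k)}(x^0)$) for the norm estimate. Your phrasing of the key inequality via monotonicity of the distance function under the inclusion $\cap_{k=0}^{\infty}W(x^k)\subseteq W(x^k)$ is just a repackaging of the paper's argument, which applies the projection minimality of $x^k$ to the point $P_{\cap_{k=0}^{\infty}W(x^k)}(x^0)\in W(x^k)$.
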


\begin{proof}
By Theorem \ref{THEOREM}, $Cl\left(x^k\right)_{k\in \mathbb{N}}\subset \tilde{S}_{E}\subset {S}_{E}$ holds true, where $Cl\left(x^k\right)_{k\in \mathbb{N}}$ is the set of weak cluster points of $\{x^k\}$. Moreover, it follows from Proposition \ref{pro5} that $\tilde{S}_{E}\subset H_k \cap W(x^k), \forall k\in \mathbb{N}$. Hence we obtain that
$Cl\left(x^k\right)_{k\in \mathbb{N}}\subset W(x^k), \forall k\in \mathbb{N}$ and thus
\begin{equation*}
Cl\left(x^k\right)_{k\in \mathbb{N}}\subset \cap _{k=0}^\infty {W(x^k)}.
\end{equation*}
By Lemma \ref{whole-strong-convergence}, it is enough to prove that
\begin{equation} \label{strongconvergenceproof}
\|x^k-x^0\|\leq \|x^0-P_{\cap _{k=0}^\infty {W(x^k)}}(x^0)\|,\quad \forall k\in \mathbb{N}.
\end{equation}
Indeed, by Lemma \ref{xk1xksolution}, for all $k\in \mathbb{N}$, $x^k=P_{W(x^k)}(x^0)$.
Thus using Lemma \ref{projeciton-property}, we have
\begin{equation}
\|x^k-x^0\|\leq \|x-x^0\|, \quad \forall x\in W(x^k).
\end{equation}
Hence the fact that $P_{\cap _{k=0}^\infty {W(x^k)}}(x^0)\in \cap _{k=0}^\infty {W(x^k)}$ and the inclusion $\cap _{k=0}^\infty {W(x^k)} \subset W(x^k), \forall k\in \mathbb{N}$ imply the result in (\ref{strongconvergenceproof}).
\end{proof}

\begin{remark}
Under same assumptions, some algorithms been proposed in  \cite{PPM-NEP2003,projection-NEP2016,J.J.Strodiot-Shrinking-pro-extra-NEP2016,DENG-HU-FANG2020} to solve $EP(f,C)$. It is worth mentioning that Fej\'{e}r  convergence was used as a common tool  in  \cite{PPM-NEP2003,projection-NEP2016,J.J.Strodiot-Shrinking-pro-extra-NEP2016,DENG-HU-FANG2020}  for proving the convergence of algorithms. Compared with these existing results, we prove the convergence of Algorithm 1  without using Fej\'{e}r convergence.
\end{remark}

\begin{theorem}\label{theorem3-VI}
The sequence $\{x^k\}$ generated by Algorithm 2 converges strongly to a solution $x^*=P_{\cap _{k=0}^\infty {W(x^k)}}(x^0)$ of $VI(F,C)$ under the following assumptions:
\begin{itemize}
\item[(i)]  $x_k\rightharpoonup x\ $ implies  $F(x_k) \rightarrow F(x)$.
\item[(ii)] the solution set of $MVI(F,C)$ is nonempty.
\end{itemize}
\end{theorem}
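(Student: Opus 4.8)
The plan is to reduce Theorem \ref{theorem3-VI} to the already-established Theorem \ref{theorem3} by recognising Algorithm 2 as the specialisation of Algorithm 1 to the equilibrium bifunction $f(x,y)=\langle F(x),y-x\rangle$, and then checking that hypotheses (i)--(ii) guarantee assumptions (A1)--(A3). First I would confirm that each step of Algorithm 1 collapses to the corresponding step of Algorithm 2 for this $f$. For Step 1, a completion-of-squares argument shows that $\min\{\langle F(x^k),y-x^k\rangle+\frac{\beta_{k}}{2}\|y-x^k\|^2:y\in C\}$ is equivalent to minimising $\|y-(x^k-\frac{1}{\beta_{k}}F(x^k))\|^2$ over $y\in C$, whose unique minimiser is exactly $y^k=P_C(x^k-\frac{1}{\beta_{k}}F(x^k))$. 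Since $f(z^k,\cdot)$ is affine, its subdifferential satisfies $\partial_2f(z^k,z^k)=\{F(z^k)\}$, so the vector $g^k$ in Step 3 is forced to equal $F(z^k)$; consequently the halfspace $H_k$, the stopping test $0\in\partial_2f(z^k,z^k)$, and the linesearch inequality all reduce verbatim to their counterparts in Algorithm 2.

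Next I would verify (A1)--(A3). Assumption (A1) is immediate because $y\mapsto\langle F(x),y-x\rangle$ is affine, hence convex, for every fixed $x$. For (A2), I would take sequences $x^k\rightharpoonup\bar{x}$ and $y^k\rightharpoonup\bar{y}$ in $C$ and write $f(x^k,y^k)=\langle F(x^k),y^k-x^k\rangle$; hypothesis (i) yields the strong convergence $F(x^k)\to F(\bar{x})$, while $y^k-x^k\rightharpoonup\bar{y}-\bar{x}$ weakly, and the pairing of a strongly convergent sequence with a weakly convergent one converges to the pairing of the limits, so $f(x^k,y^k)\to\langle F(\bar{x}),\bar{y}-\bar{x}\rangle=f(\bar{x},\bar{y})$. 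For (A3), I would observe that the Minty equilibrium problem for this $f$ reads $\langle F(y),y-x^*\rangle\geq0$ for all $y\in C$, which is precisely $MVI(F,C)$; hence $S_M$ coincides with the solution set of $MVI(F,C)$ and is nonempty by hypothesis (ii).

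With (A1)--(A3) in force, Theorem \ref{theorem3} applies directly and delivers the strong convergence of $\{x^k\}$ to $x^*=P_{\cap_{k=0}^\infty W(x^k)}(x^0)$, a solution of $EP(f,C)$; and for this $f$ the problem $EP(f,C)$ is exactly $VI(F,C)$, which completes the argument. I expect the only point requiring real care to be the joint weak continuity (A2): this is the sole place where the weak-to-strong continuity hypothesis (i) on $F$ is genuinely needed, and it is indispensable, since weak convergence of $F(x^k)$ alone would not let one pass to the limit in the bilinear pairing $\langle F(x^k),y^k-x^k\rangle$ (a product of two merely weakly convergent factors need not converge). The remaining reductions --- of the subproblem and of the halfspaces --- are routine once the affine structure of $f(z^k,\cdot)$ is used to identify $g^k=F(z^k)$.
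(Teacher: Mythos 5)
Your proposal is correct and follows essentially the same route as the paper: the paper's proof of Theorem \ref{theorem3-VI} likewise defines $f(x,y)=\langle F(x),y-x\rangle$, notes that hypotheses (i)--(ii) yield assumptions (A1)--(A3), and invokes Theorem \ref{theorem3}. Your write-up merely supplies the details the paper leaves implicit (the completion-of-squares identification of Step 1, the fact that $\partial_2 f(z^k,z^k)=\{F(z^k)\}$, and the strong-times-weak limit argument for (A2)), all of which are accurate.
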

\begin{proof}
Define $f:C\times C\to \mathbb{R}$ by $f(x,y)=\langle F(x),y-x\rangle$ for all $x,y\in C$.  By conditions (i) and (ii), the assumptions (A1)-(A3) are satisfied. So, the conclusion follows directly from Theorem \ref{theorem3}.
\end{proof}

\section{Numerical examples}\label{sec:4}
In this section, we employ Example 1 to illustrate the efficiency of the proposed Algorithm 1 and compare with \cite[Algorithm 2]{projection-NEP2016} (denoted by PA) of Dinh and Kim. Example 2 shows the efficiency of our Algorithm 2, \cite[Algorithm F]{Burachik-multiVI2019} (denoted by PA-BM) and \cite[Algorithm 2.1]{Yiran-He-NVI2015} (denoted by PA-YH). We implement the numerical experiments in MATLAB Version 7.0.0.19920 (R14) running on a Laptop with Intel(R) Core(TM) i5-2450M CPU $@$ 2.50 GHz with 6 GB RAM. We take $E(x^k)=\|x^{k}-z^k\|^2 \leq 10^{-4}$ and $E(x^k)=\|x^{k}-y^k\|^2 \leq 10^{-8}$ as the termination criteria for Example 1 and Example 2, respectively.

{\bf Example 1} Consider the equilibrium problem based on Nash-Cournot oligopolistic equilibrium models of electricity markets (see, e.g., \cite{Dual-extragradient-EP2012, projection-NEP2016, J.J.Strodiot-Shrinking-pro-extra-NEP2016}). We assume that there are $n^{c}$ (here we take $n^{c}=3$ ) companies and every company $i$ $(i = 1, 2, 3)$ has $I_i$ generating units (here, we take $I_1 = \{1\}$, $I_2 = \{2, 3\}$ and $I_3 = \{4, 5, 6\}$). We take $n^{g}$  (here, $n^{g}=6$) to be the number of all generating units and $x$ to be the vector whose entry $x_i$ stands for the power generating by unit $i$. As the strategies in \cite{Dual-extragradient-EP2012, projection-NEP2016, J.J.Strodiot-Shrinking-pro-extra-NEP2016}, we suppose that the price $p$ is a decreasing affine function of $\sigma=\sum_{i=1}^{n^{g}} x_{i}$. Hence,
$$p(x)=378.4-2 \sum_{i=1}^{n^{g}} x_{i}=p(\sigma).$$

Define the profit made by company $i$ as
$$f_{i}(x)=p(\sigma) \sum_{j \in I_{i}} x_{j}-\sum_{j \in I_{i}} x_{j} c_{j}\left(x_{j}\right),$$
where $c_{j}\left(x_{j}\right)$ stands for the cost for generating $ x_{j}$ and given by $$c_{j}\left(x_{j}\right) =\max \left\{c_{j}^{0}\left(x_{j}\right), c_{j}^{1}\left(x_{j}\right)\right\}, \quad j=1,2, \ldots, n^{g},$$
with
$$c_{j}^{0}\left(x_{j}\right) =\frac{\alpha_{j}^{0}}{2} x_{j}^{2}+\beta_{j}^{0} x_{j}+\gamma_{j}^{0}, \quad c_{j}^{1}\left(x_{j}\right) =\alpha_{j}^{1} x_{j}+\frac{\beta_{j}^{1}}{\beta_{j}^{1}+1} \gamma_{j}^{-1 / \beta_{j}^{1}}\left(x_{j}\right)^{\left(\beta_{j}^{1}+1\right) / \beta_{j}^{1}},$$
where $\alpha_{j}^{k}, \beta_{j}^{k}, \gamma_{j}^{k}(k=0,1)$ are given parameters in Table 1.

\begin{table}[!htbp]
\begin{center}{\bf Table 1} The parameters of cost functions of the generating units
\end{center}
\label{tab:1}
\centering
\begin{tabular}{lllllll}
\hline\noalign{\smallskip}
Gen. & $\alpha_{j}^{0}$ & $\beta_{j}^{0}$ & $\gamma_{j}^{0}$ & $\alpha_{j}^{1}$ & $\beta_{j}^{1}$ & $\gamma_{j}^{1}$  \\
\noalign{\smallskip}\hline\noalign{\smallskip}
1 & 0.0400 & 2.00 & 0.00 & 2.00 & 1.00 & 25.0000\\
2 & 0.0350 & 1.75 & 0.00 & 1.75 & 1.00 & 28.5714\\
3 & 0.1250 & 1.00 & 0.00 & 1.00 & 1.00 & 8.0000\\
4 & 0.0116 & 3.25 & 0.00 & 3.25 & 1.00 & 86.2069\\
5 & 0.0500 & 3.00 & 0.00 & 3.00 & 1.00 & 20.0000\\
6 & 0.0500 & 3.00 & 0.00 & 3.00 & 1.00 & 20.0000\\
\noalign{\smallskip}\hline
\end{tabular}
\end{table}

\begin{table}[!htbp]
\begin{center}{\bf Table 2} The lower and upper bounds of the power \\ generation of the generating units and companies
\end{center}
\label{tab:2}
\centering
\begin{tabular}{llllll}
\hline\noalign{\smallskip}
Com. & Gen. & $x_{\min }^{g}$ & $x_{\max }^{g}$ & $x_{\min }^{c}$ & $x_{\max }^{c}$ \\
\noalign{\smallskip}\hline\noalign{\smallskip}
1 & 1 & 0 & 80 & 0 & 80\\
2 & 2 & 0 & 80 & 0 & 130\\
2 & 3 & 0 & 50 & 0 & 130\\
3 & 4 & 0 & 55 & 0 & 125\\
3 & 5 & 0 & 30 & 0 & 125\\
3 & 6 & 0 & 40 & 0 & 125\\
\noalign{\smallskip}\hline
\end{tabular}
\end{table}

\begin{table}[!htbp]
\begin{center}{\bf Table 3} Results for Algorithm 1 and PA with $\theta$ \\ ($\delta=0.01$ and $\beta_k=0.5, \forall k \in \mathbb{N}$)
\end{center}
\label{tab:3}
\centering
\begin{tabular}{lllll}
\toprule
\multirow{2}{*}{$\theta$} & \multicolumn{2}{c}{Number of iterations} & \multicolumn{2}{c}{CPU-time(s)} \\
\cmidrule(r){2-3} \cmidrule(r){4-5} \\
&  Algorithm 1     &  PA
&  Algorithm 1     &  PA \\
\midrule
0.05  &   836    &    1661   &  41.8707  & 732.9239  \\
0.1   &   546    &     734   &  24.3050  &  99.7314  \\
0.2   &   322    &     431   &  13.8061  &  45.3339   \\
0.25  &   286    &     346   &  18.9073  &  42.1359  \\
0.5   &   150    &     263   &   9.4381  &  32.6042  \\
0.6   &   162    &     207   &   4.4148  &  12.8857 \\
0.7   &   171    &     191   &   4.5864  &  12.8545  \\
0.85  &   183    &     149   &   4.6020  &   8.2369  \\
0.95  &   187    &     184   &   4.9608  &  12.1681  \\
0.99  &   222    &     159   &   7.3944  &  15.1945 \\

\bottomrule
\end{tabular}
\end{table}

\begin{table}[!htbp]
\begin{center}
{\bf Table 4} Results for Algorithm 1 and PA with different $\beta_k$ \\
($\theta=0.1$ and $\delta=0.01$)
\end{center}
\label{tab:4}
\centering
\begin{tabular}{lllll}
\toprule
\multirow{2}{*}{$\beta_k$} & \multicolumn{2}{c}{Number of iterations} & \multicolumn{2}{c}{CPU-time(s)} \\
\cmidrule(r){2-3} \cmidrule(r){4-5} \\
&  Algorithm 1     &  PA
&  Algorithm 1     &  PA \\
\midrule

0.1 &   688  &  998  &   67.1116 & 281.2386 \\
0.2 &   617  &  966  &   33.6650 & 215.6090 \\
0.3 &   560  &  840  &   26.2082 & 134.1921  \\
0.4 &   548  &  755  &   22.8853 & 103.7563 \\
0.5 &   546  &  734  &   22.1365 &  94.7706\\
0.6 &   530  &  728  &  22.3081  &   90.5274 \\
0.7 &   530  &  729  &  20.0461  &   90.7302 \\
0.8 &   521  &  732  &  19.2817  &  91.1826 \\
0.9 &   518  &  737  &  18.9853  &   91.0266\\
1.0 &   508  &  742  &  19.1725  &   94.7394\\

\bottomrule
\end{tabular}
\end{table}

\begin{table}[!htbp]
\begin{center}
{\bf Table 5} Results for Algorithm 1 with different $\beta_k$ \\ ($\theta=0.1$ and $\delta=0.01$)
\end{center}
\label{tab:5}
\centering
\begin{tabular}{lll}
\hline\noalign{\smallskip}
$\beta_k$ & Number of iterations & CPU-time(s)\\
\noalign{\smallskip}\hline\noalign{\smallskip}

$\frac{k+1}{k+3}$     &      56       &     4.8048  \\
$\frac{k+1}{2k+3}$    &     43        &     1.7160   \\
$\frac{k+1}{3k+3}$    &     37        &      1.4976   \\
$\frac{k+1}{4k+3}$    &     33        &      1.3728  \\
$\frac{k+1}{5k+3}$    &     30        &      1.1388  \\
0.1                    &    688        &       64.7092  \\
0.3                    &    560        &       42.6819  \\
0.5                    &    546        &      39.0627   \\
0.7                    &     530       &      36.5354   \\
1.0                    &    508        &      33.6182   \\

\noalign{\smallskip}\hline
\end{tabular}
\end{table}

\begin{table}[!htbp]
\begin{center}
{\bf Table 6} Results for Algorithm 1 with different $\theta$ \\ ($\delta=0.01$ and $\beta_k=0.5, \forall k \in \mathbb{N}$)
\end{center}
\label{tab:6}
\centering
\begin{tabular}{lll}
\hline\noalign{\smallskip}
$\theta$ & Number of iterations & CPU-time(s)\\
\noalign{\smallskip}\hline\noalign{\smallskip}

0.1 &  546   &  40.8411 \\
0.3 &  249   &  17.2381 \\
0.5 &  150   &  9.5005  \\
0.8 &  175   &  10.7485  \\
0.99 &  222  &   15.0853  \\

\noalign{\smallskip}\hline
\end{tabular}
\end{table}

\begin{table}[!htbp]
\begin{center}
{\bf Table 7} Results for Algorithm 1 with different $\delta$ \\ ($\theta=0.1$ and $\beta_k=0.5, \forall k \in \mathbb{N}$)
\end{center}
\label{tab:7}
\centering
\begin{tabular}{lll}
\hline\noalign{\smallskip}
$\delta$ & Number of iterations & CPU-time(s)\\
\noalign{\smallskip}\hline\noalign{\smallskip}

0.01 &  546  &  24.8822 \\
0.05 &  546  &  20.3893 \\
0.1  &  546  &  21.7309 \\
0.25 &  546  &  20.0773 \\
0.5  &  546  &  19.2037 \\

\noalign{\smallskip}\hline
\end{tabular}
\end{table}

\begin{figure}[!htbp]
\centerline{
\includegraphics[scale=0.65]{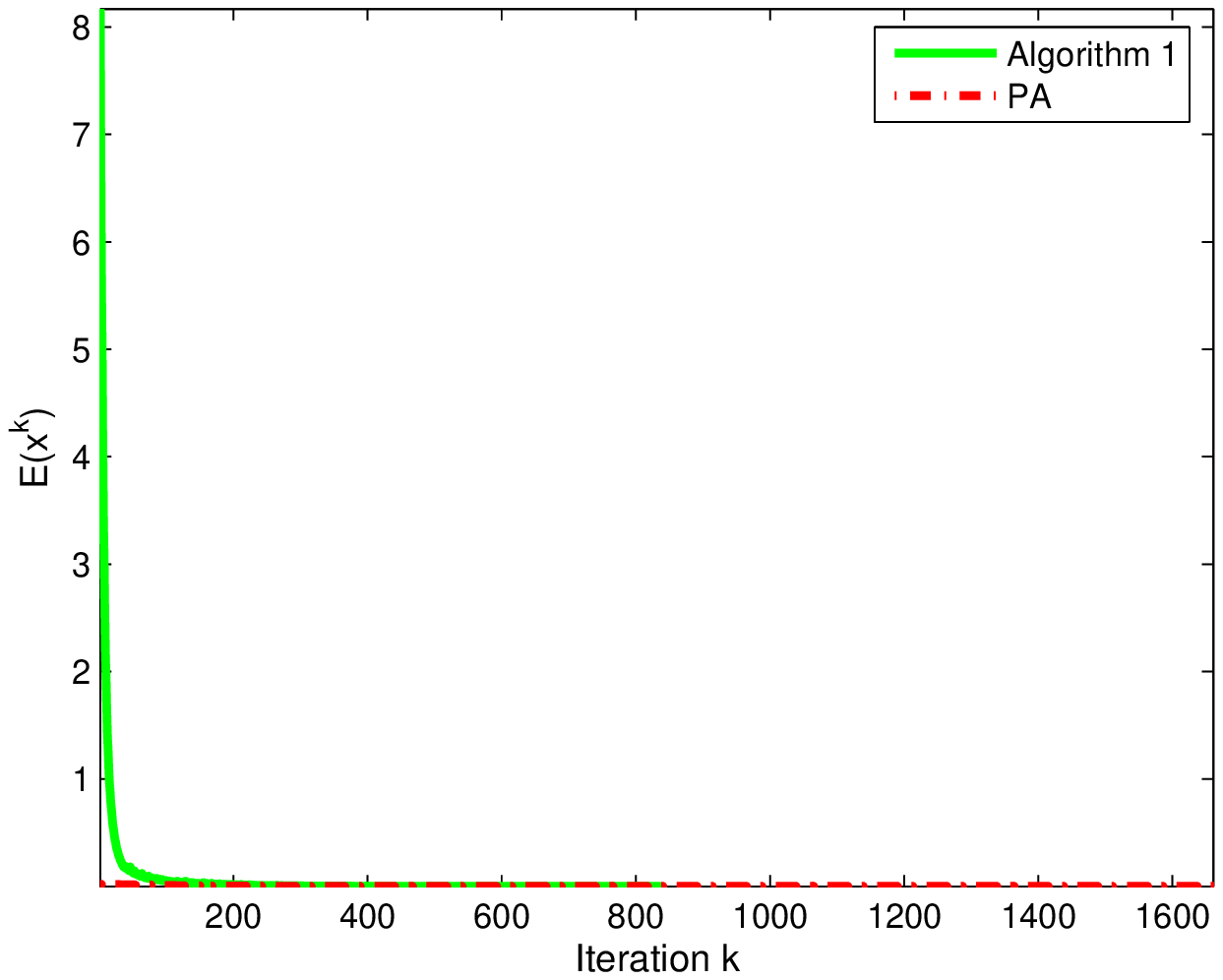}}
\centerline{{\bf Fig. 1} Results for Algorithm 1 and PA with $\theta=0.05, \delta=0.01$ and $\beta_k=0.5, \forall k \in \mathbb{N}$}
\label{fig:1}
\end{figure}

\begin{figure}[!htbp]
\centerline{
\includegraphics[scale=0.65]{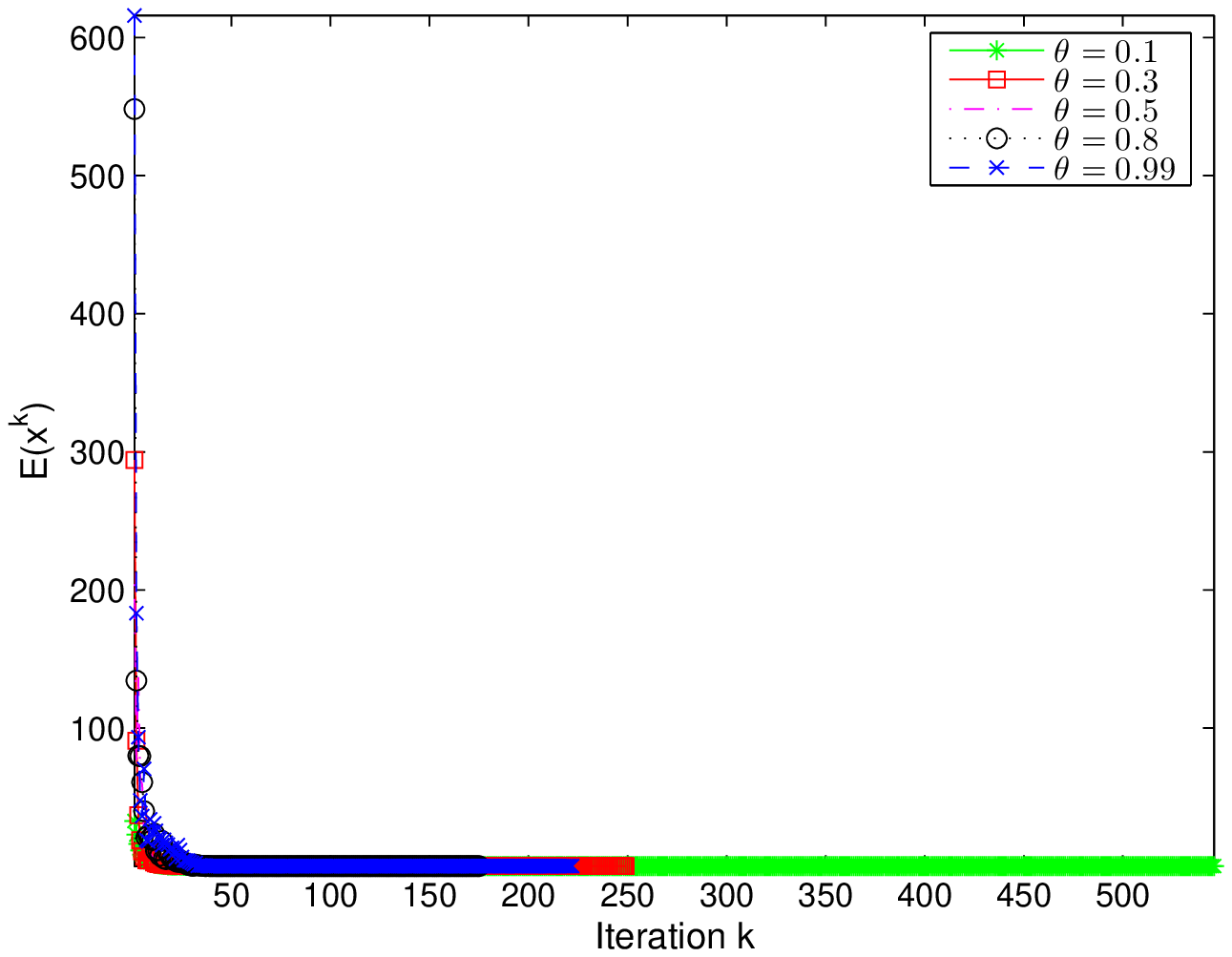}
}
\centerline{{\bf Fig. 2} Results for Algorithm 1 with different $\theta$ ($\delta=0.01$ and $\beta_k=0.5, \forall k \in \mathbb{N}$)}
\label{fig:2}
\end{figure}

\begin{figure}[!htbp]
\centerline{
\includegraphics[scale=0.65]{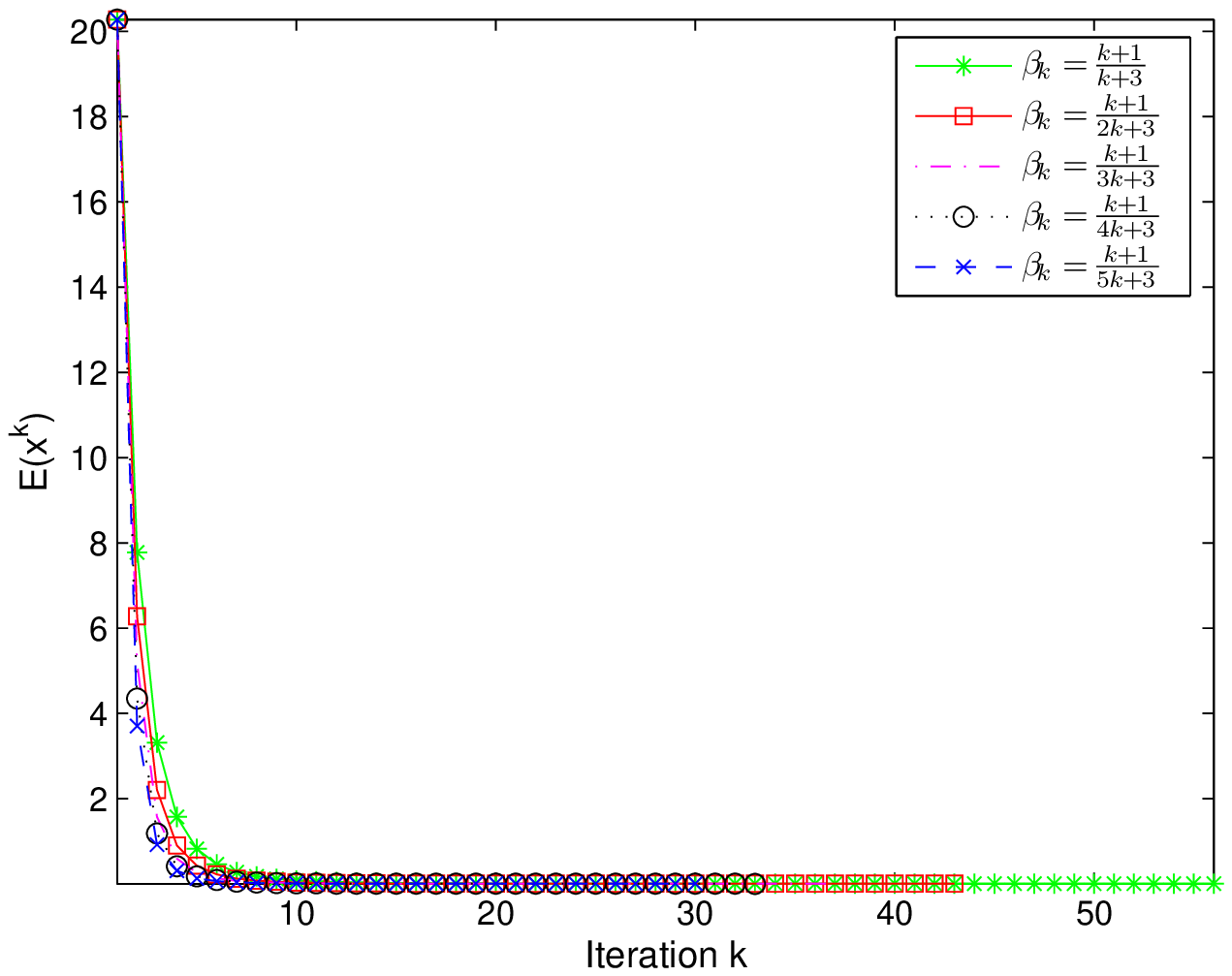}
}
\centerline{{\bf Fig. 3} Results for Algorithm 1 with different $\beta_k$ ($\theta=0.1$ and $\delta=0.01$)}
\label{fig:3}
\end{figure}

\begin{figure}[!htbp]
\centerline{
\includegraphics[scale=0.65]{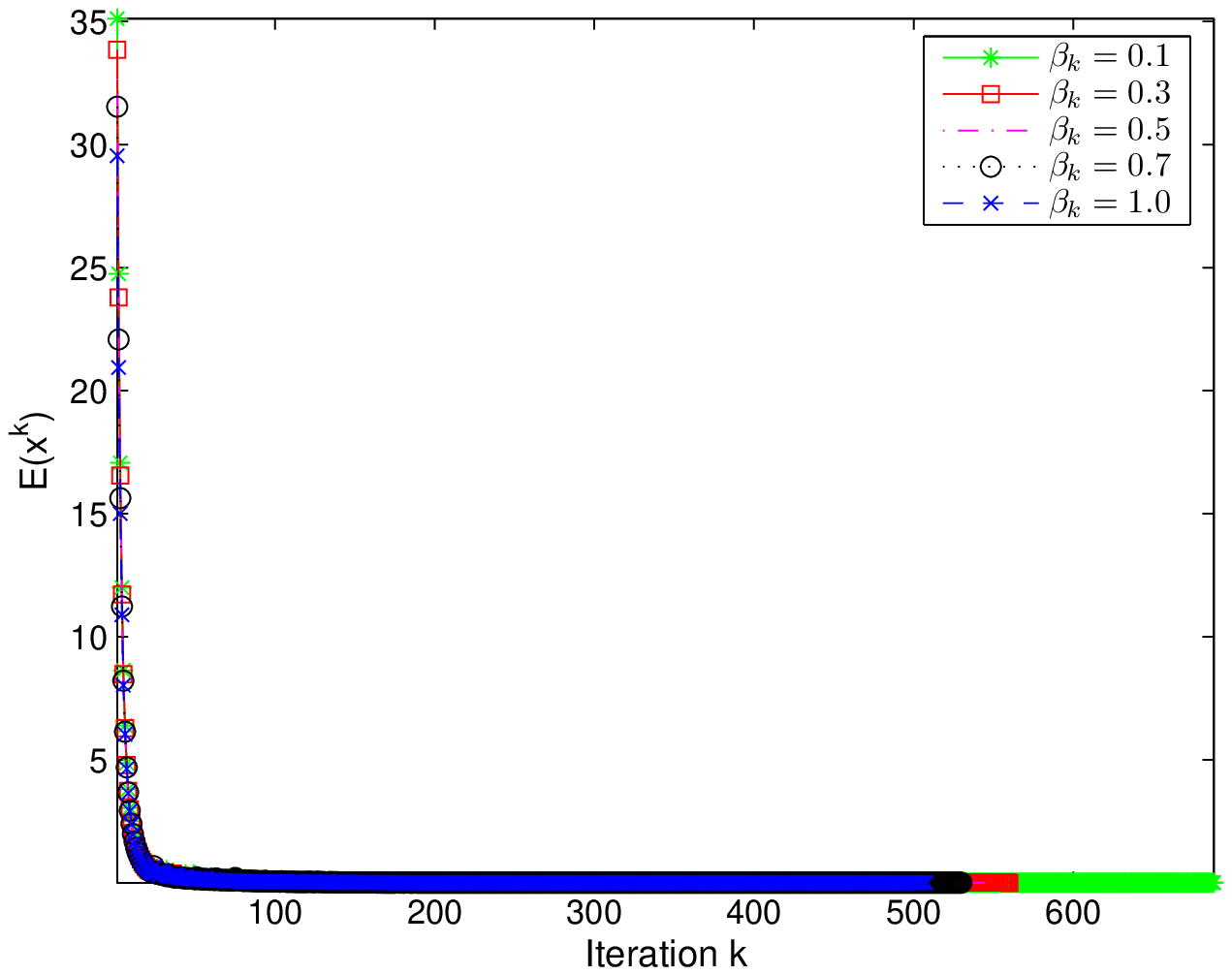}
}
\centerline{{\bf Fig. 4} Results for Algorithm 1 with different $\beta_k$ ($\theta=0.1$ and $\delta=0.01$)}
\label{fig:4}
\end{figure}

\begin{figure}[!htbp]
\centerline{
\includegraphics[scale=0.65]{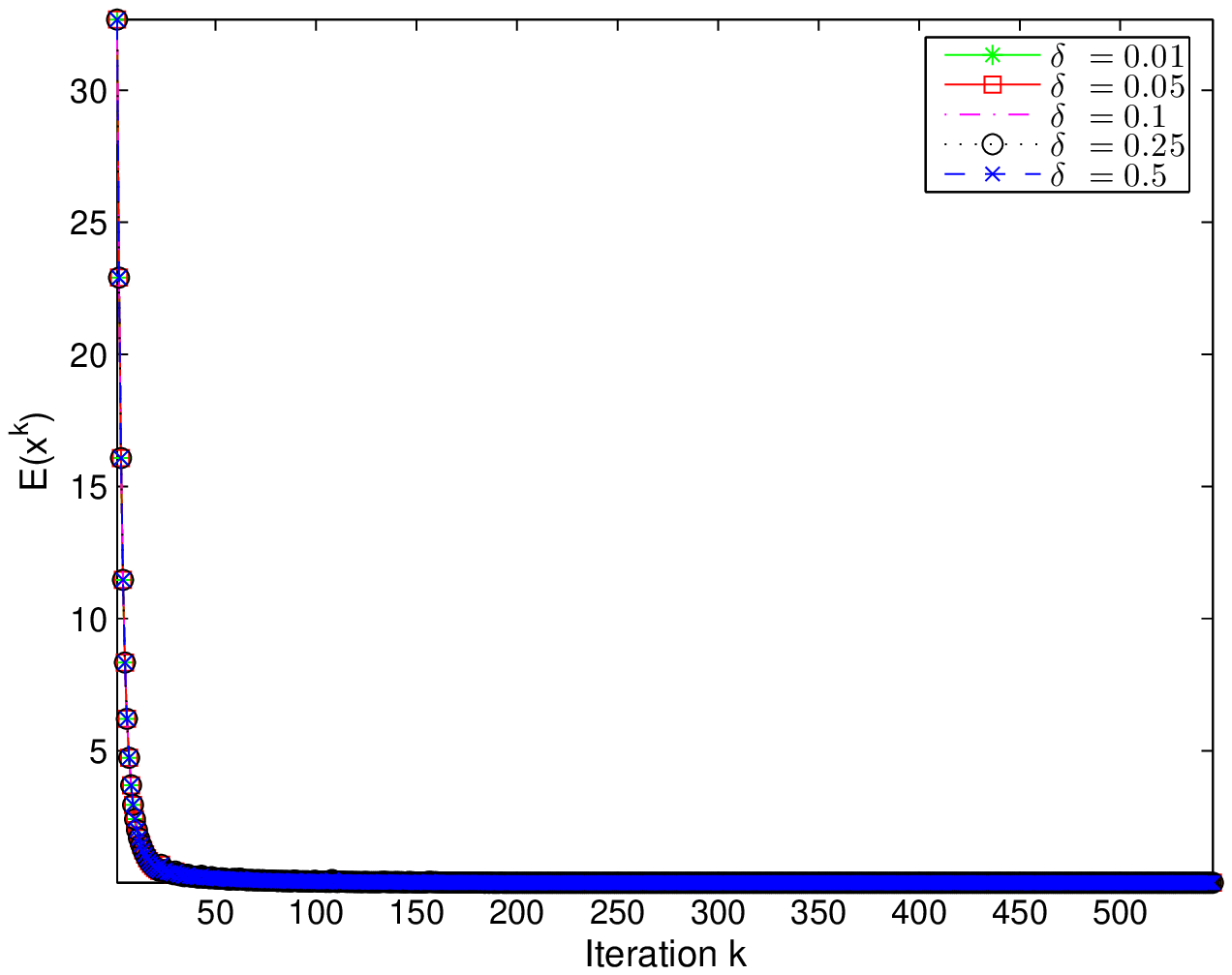}
}
\centerline{{\bf Fig. 5} Results for Algorithm 1 with different $\delta$ ($\theta=0.1$ and $\beta_k=0.5, \forall k \in \mathbb{N}$)}
\label{fig:5}
\end{figure}

Take $\left(x_{\min }^{g}\right)_{j}$  and $\left(x_{\max }^{g}\right)_{j}$ as the lower and upper bounds for the power generating by the unit $j$, respectively. Moreover, take $\left(x_{\min }^{c}\right)_{j}$  and $\left(x_{\max }^{c}\right)_{j}$ to be the lower and upper bounds for the power generating by the company $j$, respectively. All the aforementioned bounds are shown in Table 2. Thus we obtain the strategy set of the problem given by
$$C =\left\{x=\left(x_{1}, \ldots, x_{n^g}\right)^{T} :\left(x_{\min }^{g}\right)_{j} \leq x_{j} \leq\left(x_{\max }^{g}\right)_{j}, \forall j\right\}.$$
We take $q^{i} =\left(q_{1}^{i}, \ldots, q_{n^g}^{i}\right)^{T}$ with
$$q_{j}^{i}=\left\{\begin{array}{ll}{1,} & {\text { if } j \in I_{i}} \\ {0,} & {\text { if } j \notin I_{i},}\end{array}\right.$$
and set
$$\begin{array}{l}{A =2 \sum_{i=1}^{n^{c}}\left(1-q^{i}\right)\left(q^{i}\right)^{T},\quad B =2 \sum_{i=1}^{n^{c}} q^{i}\left(q^{i}\right)^{T}}, \\ {a =-387.4 \sum_{i=1}^{n^{c}} q^{i}, \quad \text { and } \quad c(x) =\sum_{j=1}^{n^{g}} c_{j}\left(x_{j}\right).}\end{array}$$
Consequently, the involved function $f : \mathbb{R}^6\times\mathbb{R}^6 \to\mathbb{R}$ is defined by
$$f(x, y)=[(A+B) x+B y+a]^{T}(y-x)+c(y)-c(x),\quad \forall x, y \in\mathbb{R}^6.$$
Note the fact that $A$ is not positive semidefinite, $B$ is symmetric positive semidefinite and $c(x)$ is a nonsmooth convex function. Combining this fact with the result that $f(x, y)+f(y, x)=-(y-x)^TA(y-x)$ proved in \cite{Dual-extragradient-EP2012},  we obtain that $f$ is nonmonotone and nonsmooth.

We apply Algorithm 1 for solving this problem and compare with PA of Dinh and Kim \cite{projection-NEP2016}. Take $x^0=(20,50,40,45,30,30)^T$ as the initial point in $C$.
The numerical results and the dependency of our Algorithm 1 on the parameters, such as $\{\beta_k\}$, $\theta$ and $\delta$, are reported in Tables 3, 4, 5, 6 and 7 and Figs. 1, 2, 3, 4 and 5. The reports show the strength and efficiency of Algorithm 1 for this example. More precisely, Tables 3 and 4 and Fig. 1 show that Algorithm 1 does well in the convergence rate. Table 5 and 6 and Figs. 2, 3 and 4 illustrate that the strict dependency of the convergence rate of our Algorithm 1 on the sequence $\{\beta_k\}$ and the parameter $\theta$, while Table 7 and Fig. 5 show that the convergence rate of Algorithm 1 is independent of the parameter $\delta$.

Now we apply the following problem to illustrate the efficiency of our Algorithm 2, \cite[Algorithm F]{Burachik-multiVI2019} (denoted by PA-BM) and \cite[Algorithm 2.1]{Yiran-He-NVI2015} (denoted by PA-YH).

{\bf Example 2} Consider the quasimonotone variational inequality proposed by Hadjisavvas and Schaible in \cite{Hadjisavvas1996} (see e.g., \cite{Yiran-He-NVI2015, Burachik-multiVI2019}), where $C=[0,1]\times[0,1]$, $t=(x_1+\sqrt{x_1^2+4x_2})/2$ and $F$ is defined by
$$ F(x_1,x_2)=(-t/(1+t),-1/(1+t))^T.$$
The results for this problem are reported in Tabs. 8, 9 and 10 and Fig. 6. They verify that our Algorithm 2 works well for Example 2 and has an advantage in numerical behavior.

\begin{figure}[!htbp]
\centerline{
\includegraphics[scale=0.65]{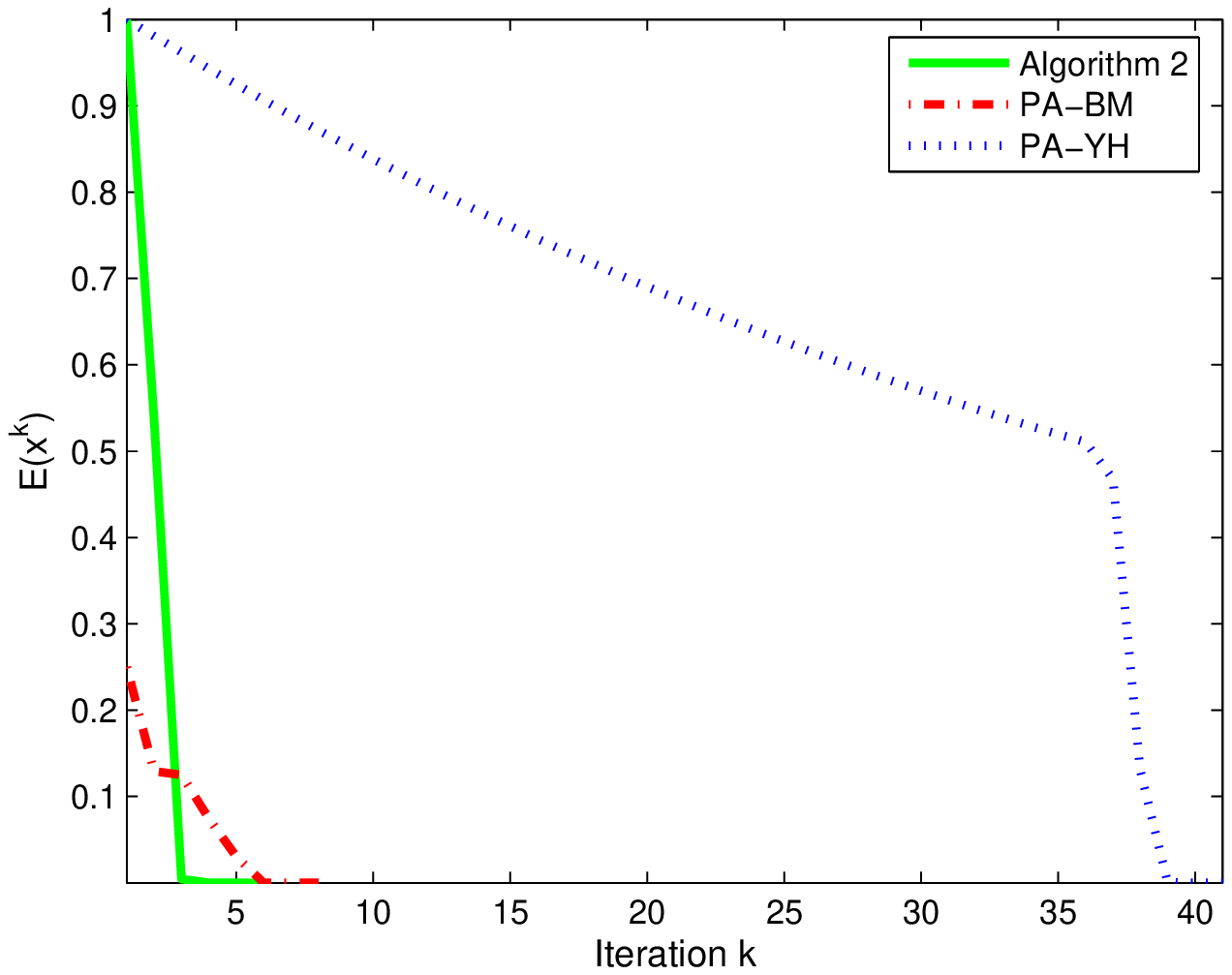}
}
\centerline{{\bf Fig. 6} Results for Algorithm 2, PA-BM and PA-YH with $\delta=0.01$, $\theta=0.95$ and $\beta_k=0.5, \forall k \in \mathbb{N}$}
\label{fig:6}
\end{figure}

\begin{table}[!htbp]
\begin{center}
{\bf Table 8} Results for Algorithm 2, PA-BM and PA-YH with different initial point $x^0$ \\
($\theta=0.95$, $\delta=0.01$ and $\beta_k=0.5, \forall k \in \mathbb{N}$)
\end{center}
\label{tab:8}
\centering
\begin{tabular}{lllllll}
\toprule
\multirow{2}{*}{$x^0$} & \multicolumn{3}{c}{Number of iterations} & \multicolumn{3}{c}{CPU-time(s)} \\
\cmidrule(r){2-4} \cmidrule(r){5-7} \\
&  Algorithm 2     &  PA-BM &  PA-YH
&  Algorithm 2     &  PA-BM &  PA-YH \\
\midrule
 $(0,0)^T$     & 6    &  8    &  41  &   0.7176    &  0.4056   &    2.9796    \\
 $(0,1)^T$     & 5    &  7    &  5   &   0.2808    &  0.2184   &    0.1560   \\
 $(1,0)^T$     & 5    &  8    &  74  &   0.1560    &  0.1872   &    1.4664    \\
 $(1,1)^T$     & 1    &  1    &  1   &   0.0156    &  0.0156   &    0.0156    \\
 $(0.3,0.5)^T$ & 5    &  6    &  36  &   0.0468    &  0.1248   &    0.6708    \\
 $(0.7,0.1)^T$ & 5    &  8    &  74  &   0.1404    &  0.1872   &    1.6536  \\

\bottomrule
\end{tabular}
\end{table}

\begin{table}[!htbp]
\begin{center}
{\bf Table 9} Results for Algorithm 2, PA-BM and PA-YH with different $\beta_k$ \\
($x^0=(0,0)^T$, $\theta=0.5$ and $\delta=0.01$)
\end{center}
\label{tab:9}
\centering
\begin{tabular}{lllllll}
\toprule
\multirow{2}{*}{$\beta_k$} & \multicolumn{3}{c}{Number of iterations} & \multicolumn{3}{c}{CPU-time(s)} \\
\cmidrule(r){2-4} \cmidrule(r){5-7} \\
&  Algorithm 2     &  PA-BM &  PA-YH
&  Algorithm 2     &  PA-BM &  PA-YH \\
\midrule

0.05 & 17  &  106  & 64 &  1.3728   &  5.6316    &  3.1200    \\
0.10 & 17  &  58   & 64 &  0.3588   &  1.3728    &  1.2948    \\
0.15 & 17  &  42   & 64 &  0.3276   &  1.0296    &  1.8252    \\
0.20 & 17  &  34   & 64 &  0.3276   &  0.8736    &  1.4820    \\
0.25 & 17  &  30   & 64 &  0.4836   &  0.5772    &  1.5756    \\
0.30 & 17  &  27   & 64 &  0.4368   &  0.6084    &  1.6536    \\
0.35 & 17  &  24   & 64 &  0.2964   &  0.5148    &  1.9500    \\
0.40 & 17  &  23   & 64 &  0.3900   &  0.5928    &  1.3572    \\
0.45 & 17  &  22   & 64 &  0.3900   &  0.6084    &  1.8408     \\
0.50 & 17  &  21   & 64 &  0.2652   &  0.5772    &  1.7316     \\

\bottomrule
\end{tabular}
\end{table}

\begin{table}[!htbp]
\begin{center}
{\bf Table 10} Results for Algorithm 2, PA-BM and PA-YH with different $\theta$ \\
($x^0=(0,0)^T$, $\delta=0.01$ and $\beta_k=0.5, \forall k \in \mathbb{N}$)
\end{center}
\label{tab:10}
\centering
\begin{tabular}{lllllll}
\toprule
\multirow{2}{*}{$\theta$} & \multicolumn{3}{c}{Number of iterations} & \multicolumn{3}{c}{CPU-time(s)} \\
\cmidrule(r){2-4} \cmidrule(r){5-7} \\
&  Algorithm 2     &  PA-BM &  PA-YH
&  Algorithm 2     &  PA-BM &  PA-YH \\
\midrule

0.05 & 199   &  237   &  383   & 11.3725    & 12.6985   & 20.5609    \\
0.10 & 98    &  117   &  214   & 4.6800     & 6.2400   &  5.7252    \\
0.20 & 47    &  57    &  123   & 1.0452     & 1.4352   &  2.9172    \\
0.25 & 37    &  45    &  104   & 1.1388     & 1.2012   &  2.4024    \\
0.50 & 17    &  21    &  64    & 1.3104     & 1.2324   &  2.7612     \\
0.60 & 13    &  17    &  56    &  0.8580    &  0.4524    &  1.5912    \\
0.70 & 11    &  14    &  50    &  0.2340    &  0.2496    &  1.3728    \\
0.85 & 8     &  10    &  45    &  0.1872    &  0.2808    &  1.4820   \\
0.95 & 6     &  8     &  41    &  0.1716    &  0.1560    &  0.9204  \\
0.99 & 5     &  7     &  57    &  0.0624    &  0.1560    &  1.6536  \\

\bottomrule
\end{tabular}
\end{table}

\section{Conclusion}\label{sec:5}
We propose a projection algorithm with an Armijo-type linesearch for solving nonmonotone and non-Lipschitzian equilibrium problems in Hilbert spaces.  The convergence of the proposed algorithm  requires only the nonemptyness of the solution set of the associated Minty equilibrium problem, instead of  the pseudomonotonicity assumed commonly in many projection-type algorithms. In addition, compared with the existing methods  with same assumptions,  we do not need to employ the Fej\'{e}r monotonicity in the framework of proving the convergence of our algorithm.



\section*{Disclosure statement}

The authors declare that they have no conflict of interest.

\section*{Funding}

This work was partially supported by the National Natural Science  Foundation of China (11471230 and 11771067) and the Applied Basic Research Project of Sichuan Province (2018JY0169).



\begin{thebibliography}{}

\bibitem{Anh-hybrid2013} Anh, P. N.: A hybrid extragradient method extended to fixed point problems and equilibrium problems. Optimization. 62(2), 271-283 (2013)

\bibitem{subgradient-extragradient-EP2015} Anh, P. N., An, L. T. H.: The subgradient extragradient method extended to equilibrium problems. Optimization. 64(2), 225-248 (2015)

\bibitem{Anh-projection-pseudomonotone-EP2013} Anh, P. N.: A hybrid extragradient method for pseudomonotone equilibrium problems and fixed point problems. Bull. Malays. Math. Sci. Soc. (2) 36(1), 107-116 (2013)

\bibitem{CAMOT2011} Bauschke, H.H., Combettes, P.L.. Convex Analysis and Monotone Operator Theory in Hilbert Spaces. Springer, New York (2011)

\bibitem{Bello-VI2019} Bello Cruz, J.Y., D\'{\i}az Mill\'{a}n, R., Phan, H.M.: Conditional extragradient algorithms for solving variational inequalities.  Pacific J. Optim. 15(3), 331-357 (2019)
\bibitem{existence-EP2013} Bigi, G., Pappalardo, M., Passacantando, M.: Existence and solution methods for equilibria. Eur. J. Oper. Res. 227(1), 1-11 (2013)

\bibitem{Auxiliary-priciples-EP2017} Bigi, G., Passacantando, M.: Auxiliary problem principles for equilibria. Optimization. 66(12), 1955-1972 (2017)

\bibitem{Blum-Oettli-EP1994} Blum, E., Oettli, W.: From optimization and variational inequalities to equilibrium problems. Math. Student. 63, 127-149 (1994)

\bibitem{Burachik-multiVI2019} Burachik, R. S., D\'{\i}az Mill\'{a}n, R.: A projection algorithm for non-monotone variational inequalities. Set-Valued Var. Anal.28(1), 149-166(2020).

\bibitem{subgradient-extragradient-VI2011} Censor, Y., Gibali, A., Reich, S.: Strong convergence of subgradient extragradient methods for variational inequality problem in Hilbert space. Optim. Methods Softw. 26, 827-845 (2011)

\bibitem{DENG-HU-FANG2020} Deng L.M., Hu R., Fang Y.P.: Projection extragradient algorithms for solving nonmonotone and non-Lipschitzian equilibrium problems in Hilbert spaces. Numer. Algorithms  86, 191-221 (2021)

\bibitem{projection-pseudo-EP2015} Dinh, B.V., Muu, L.D.: A projection algorithm for solving pseudomonotone equilibrium problems and it's application to a class of bilevel equilibria. Optimization. 64, 559-575 (2015)

\bibitem{projection-NEP2016} Dinh, B.V., Kim, D.S.: Projection algorithms for solving nonmonotone equilibrium problems in Hilbert space. J. Comput. Appl. Math. 302, 106-117 (2016)

\bibitem{Ky-Fan-VI1972}Fan, K.: A minimax inequality and applications. In: Shisha, O. (ed.) Inequality III, pp. 103-113. Academic Press, New York (1972)

\bibitem{proximal-like-Equi-programming1997}Flam, S.D., Antipin, A.S.: Equilibrium programming using proximal-like algorithms. Math. Program. 78, 29-41 (1997)

\bibitem{Hadjisavvas1996}Hadjisavvas, N., Schaible, S.: Quasimonotone variational inequalities in Banach spaces. J. Optim. Theory Appl. 90(1), 95-111 (1996)

\bibitem{Hieu} Hieu, D.V., Quy, P.K.,  Hong, L.T., Vy L.V.: Accelerated hybrid methods for solving pseudomonotone equilibrium problems. Adv. Comput. Math. 46, Article number: 58 (2020)

\bibitem{sm-se-EP2003} Iusem, A.N., Sosa, W.: New existence results for equilibrium problems. Nonlinear Anal. TMA 52, 621-635 (2003)

\bibitem{PPM-EP2010}Iusem, A.N., Sosa, W.: On the proximal point method for equilibrium problem in Hilbert spaces. Optimization. 59, 1259-1274 (2010)

\bibitem{PPM-NEP2003} Konnov, I.V.: Application of the proximal point method to nonmonotone equilibrium problems. J. Optim. Theory Appl. 119(2), 317-333 (2003)

\bibitem{extragradient-VI1976}Korpelevich, G.M.: The extragradient method for finding saddle points and other problems. Matekon. 12(4), 747-756 (1976)

\bibitem{classicalVI1967} Lions, J. L., Stampacchia, G.: Variational inequalities. Comm. Pure Appl. Math. 20(3), 493-519 (1967)

\bibitem{Gap-function-EP2003}Mastroeni, G.: Gap functions for equilibrium problems. J. Glob. Optim. 27, 411-426 (2003)

\bibitem{auxiliary-EP2003}Mastroeni, G.: On auxiliary principle for equilibrium problems. In: Daniele, P., Giannessi, F., Maugeri, A. (eds.) Equilibrium Problems and Variational Models, Kluwer Academic Publishers, Dordrecht, The Netherlands, 289-298 (2003)

\bibitem{SplittingPM-EP2009}Moudafi, A.: On the convergence of splitting proximal methods for equilibrium problems in Hilbert spaces. J. Math. Anal. Appl. 359, 508-513 (2009)

\bibitem{Regularization-VI2009}Muu, L.D., Quoc, T.D.: Regularization algorithms for solving monotone Ky Fan inequalities with application to a Nash-Cournot equilibrium model. J. Optim. Theory Appl. 142, 185-204 (2009)

\bibitem{Muu-Oettli-EP1992} Muu, L.D., Oettli W.. Convergence of an adaptive penalty scheme for finding constrained equilibria, Nonlinear Anal. TMA. 18, 1159-1166 (1992)

\bibitem{NST-condition2007} Nakajo, K., Shimoji, K., Takahashi, W.: Strong convergence to common fixed points of families of nonexpansive mappings in Banach spaces. J. Nonlinear Convex Anal. 8, 11-34 (2007)



\bibitem{Dual-extragradient-EP2012}Quoc, T.D., Anh, P.N., Muu, L.D.: Dual extragradient algorithms extended to equilibrium problems. J. Glob. Optim. 52, 139-159 (2012)

\bibitem{Iteration-Gap-function-EP2012}Quoc, T.D., Muu, L.D.: Iterative methods for solving monotone equilibrium problems via dual gap functions. Comput. Optim. Appl. 51, 709-728 (2012)

\bibitem{Rockafellar1970} Rockafellar, R.T.. Convex Analysis. 233-237. Princeton University Press, Princeton (1970)

\bibitem{inexact-subgradient-EP2011}Santos, P., Scheimberg, S.: An inexact subgradient algorithm for equilibrium problems. Comput. Appl. Math. 30, 91-107 (2011)

\bibitem{Relaxed-projection-FEP2011}Scheimberg, S., Santos, P.S.M.: A relaxed projectionmethod for finite dimensional equilibrium problems. Optimization. 60, 1193-1208 (2011)


\bibitem{J.J.Strodiot-Shrinking-pro-extra-NEP2016} Strodiot, J.J., Vuong, P.T., Nguyen, T.T.V.: A class of shrinking projection extragradient methods for solving non-monotone equilibrium problems in Hilbert spaces. J. Glob. Optim. 64, 159-178 (2016)

\bibitem{D.Q.Tran-Extragradient-EP2008} Tran, D.Q., Dung, M.L., Nguyen, V.H.: Extragradient algorithms extended to equilibrium problems. Optimization. 57(6), 749-776 (2008)






\bibitem{HuangNJ-projection-VI2011} Xia, F. Q.,  Huang,  N. J.: A projection-proximal point algorithm for solving generalized variational inequalities. J. Optim Theory Appl. 150, 98-117 (2011)

\bibitem{CQ-method-fixed-point-problem2006} Martinez-Yanes, C., Xu, H.K.. Strong convergence of the CQ method for fixed point iteration processes. Nonlinear Anal. TMA. 64, 2400-2411 (2006)

\bibitem{Yiran-He-NVI2015} Ye, M.L., He, Y.R.: A double projection method  for solving variational inequalities without monotonicity. Comput. Optim. Appl. 60, 141-150 (2015)

\bibitem{combined-relaxation-EP2006}Zeng, L.C., Yao, J.Y.: Modified combined relaxation method for general monotone equilibrium problems in Hilbert spaces. J. Optim. Theory Appl. 131, 469-483 (2006)

\end{thebibliography}
\end{document}